%
%
%
%
\documentclass{amsart}

\usepackage{color,graphicx}
\usepackage{amsmath,latexsym,amssymb,amsfonts,amsbsy, amsthm}
\usepackage{verbatim,tikz,tikz-cd}
\usepackage{bbm}
\usepackage{dsfont}
\usepackage{amssymb}
\usepackage{rotating} 

\newtheorem{theorem}{Theorem}[section]
\newtheorem{lemma}[theorem]{Lemma}
\newtheorem{proposition}[theorem]{Proposition}

\newtheorem{assumption}[theorem]{Assumption}
\newtheorem{conjecture}[theorem]{Conjecture}

\theoremstyle{definition}
\newtheorem{definition}[theorem]{Definition}
\newtheorem{example}[theorem]{Example}

\theoremstyle{remark}
\newtheorem{remark}[theorem]{Remark}

\newcommand{\be}{\begin{equation}}
\newcommand{\ee}{\end{equation}}
\newcommand{\bee}{\begin{equation*}}
\newcommand{\eee}{\end{equation*}}
\newcommand{\ba}{\begin{aligned}}
\newcommand{\ea}{\end{aligned}}
\hyphenation{mar-gin-al-ia}
\hyphenation{bra-va-do}

\numberwithin{equation}{section}



\begin{document}

\title{Coisotropic branes on tori and Homological mirror symmetry}

\author{Yingdi Qin}
\address{Department of Mathematics, University Of Penn, Philadelphia, PA 19103}
\email{yqin@sas.upenn.edu}


\date{July 1, 2020 and, in revised form, May 22, 2022.}


\keywords{Fukaya category, Coisotropic brane, Homological mirror symmetry}

\begin{abstract}
Homological mirror symmetry (HMS) asserts that the Fukaya category of a symplectic manifold is derived equivalent to the category of coherent sheaves on the mirror complex manifold. Without suitable enlargement (split closure) of the Fukaya category, certain objects of it are missing to prevent HMS from being true. One possible solution is to include coisotropic branes into the Fukaya category.
This paper gives a construction for linear symplectic tori of a version of Fukaya category including coisotropic branes by using a doubling procedure, and discussing the relation between the Fukaya category of the doubling torus and the Fukaya category of the original torus.
\end{abstract}

\maketitle

\tableofcontents
%

\section{Introduction}

Mirror symmetry is a fascinating relation between a pair of Calabi-Yau manifolds originating from physics. Even though two mirror manifolds may look very different geometrically, they are predicted to give rise to equivalent quantum field theories and thus equivalent notions of physics.\\

It took mathematicians decades to understand this relation. For a pair of mirror Calabi-Yau manifolds $X$ and $X^\vee$, the first thing to notice is that their Hodge diamonds are reflections of each other, i.e. $h^{p,q}(X)=h^{q,\,p}(X^\vee)$. In 1990, physicists Philip Candelas, Xenia de la Ossa, Paul Green, and Linda Parkes showed that the enumerative invariants and period integrals of a mirror pair were related to each other, which led to the calculations of Gromov-Witten invariants on quintic 3-folds. \\

In 1994, Maxim Kontsevich proposed the celebrated Homological Mirror Symmetry conjecture which provides a mathematically rigorous explanation for the mysterious mirror symmetry phenomenon. It asserts that the Fukaya category of a symplectic manifold is derived equivalent to the category of coherent sheaves of its mirror dual (usually a complex manifold). \\

On the other hand, Strominger-Yau-Zaslow (SYZ)'s conjecture\cite{SYZ} gives a geometric intuition for mirror symmetry. It conjectures that mirror symmetry is a torus duality between symplectic geometry (A-side) and complex geometry (B-side), which can be
interchanged by implementing a Fourier-transform on torus fibers. Within this geometric framework (following Fukaya\cite{F} and Abouzaid\cite{AbF}), one could construct a mirror functor from the Fukaya category of the A-side to the category of coherent sheaves of the B-side which induces the derived equivalence.
\subsection{Homological mirror symmetry and coisotropic branes}
The Fukaya category consists of Lagrangian branes (Lagrangian submanifolds equipped with flat connections) as objects and Lagrangian intersections as morphisms, and the Floer products (compositions of morphisms in the Fukaya category) are determined by counting holomorphic triangles with vertices at Lagrangian intersections and with edges in Lagrangian submanifolds.
In general, Lagrangian submanifolds do not generate the version of Fukaya category for which homological mirror symmetry holds as stated, certain objects are missing. For example, Let $E_\tau$ be a elliptic curve with a Teichmuller parameter $\tau$. Suppose $E_\tau$ admits a complex multiplication, for example, $\tau =i$, then multiplication by $i$ is an automorphism of $E_\tau$ which is called a complex multiplication. Let $E=E_\tau^2$ be the product abelian surface, then the mirror of $E$ is a 4 dimensional symplectic torus $(T,\,\omega)$. Considering the Grothendieck groups of the categories involved in homological mirror symmetry and their images under the Chern character map, we expect a commutative diagram.
$$
\begin{tikzcd}
K_0(D^bCoh(E)) \arrow[rr] \arrow[d, "Ch"] &  & K_0(Fuk(T,\omega)) \arrow[d, "Ch"] \\
H^*(E) \arrow[rr]                         &  & H^*(T).
\end{tikzcd}
$$
By comparing the images of the Chern character maps, one finds that $Im(Ch)\otimes \mathbb{Q}$ for $K_0(D^bCoh(E))$ is 6 dimensional, yet $Im(Ch)\otimes \mathbb{Q}$ for $K_0(Fuk(T))$ is 5 dimensional, taking values in the kernel of 
$$  \omega\wedge:H^2(T;\mathbb{Q})\rightarrow H^4(T;\mathbb{Q}),$$
thus the two categories cannot be equivalent. This indicates that Lagrangians do not generate a sufficiently large category. Kapustin and Orlov \cite{KO} suggest that the missing objects are coisotropic submanifolds equiped with a $U(1)$ connection satisfying certain conditions.
The difficulty of incorporating coisotropic submanifolds is to define the morphisms involving coisotropic objects and the product operations involving these morphisms. It is a long standing problem to define the appropriate Fukaya category including coisotropic branes proposed by Kapustin-Orlov \cite{KO} and further studied by Aldi-Zaslow \cite{AZ}, Chan-Leung-Zhang \cite{CLZ}, Herbst \cite{H}.

\begin{remark}
	An alternative way to enlarge the Fukaya category is to take the split closure of the derived Fukaya category, i.e. enlarging the Fukaya category by adding formal direct summands of objects\cite{S} \cite{AS}.
\end{remark}

\subsection{Main result}

On a linear symplectic torus $(T,\,\omega)$, a new method is proposed to extend the Fukaya category of the torus to include coisotropic submanifolds alongside linear Lagrangian submanifolds as objects of the category. The approach is by considering a twisted doubling torus $T\times T^\vee$ (see definition \ref{Double}) of $T$ and lifting (possibly coisotropic) objects into Lagrangians of $T\times T^\vee$.
\begin{theorem}\label{LiftisLag}\quad
	\begin{enumerate}
		\item The lift of a (possibly coisotropic) linear object on $T$ is a Lagrangian submanifold of $T\times T^\vee$.
		\item The lift is a complex submanifold with respect to a canonical complex structure on $T\times T^\vee$.
	\end{enumerate}

\end{theorem}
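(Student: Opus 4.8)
The plan is to exploit the fact that both the torus and the objects are linear, so that everything collapses to linear algebra on the universal cover. Write $T=V/\Lambda$ and $T^{\vee}=V^{*}/\Lambda^{*}$, so that $T\times T^{\vee}=(V\oplus V^{*})/(\Lambda\oplus\Lambda^{*})$, and let $\Omega$ and $J$ denote the doubling symplectic form and the canonical complex structure supplied by Definition \ref{Double}. A linear (possibly coisotropic) object is a coisotropic subspace $U\subseteq V$ together with a constant brane $2$-form $F$, and by construction its lift is a linear subtorus whose tangent space is one fixed subspace $W\subseteq V\oplus V^{*}$, the same at every point. Consequently the two assertions are purely statements about $W$: part (1) is $\Omega|_{W}=0$ together with $\dim_{\mathbb{R}}W=\tfrac12\dim_{\mathbb{R}}(V\oplus V^{*})=\dim_{\mathbb{R}}V$, and part (2) is $J(W)=W$. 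So the first step is to read off explicit formulas for $\Omega$, $J$, and $W$ from Definition \ref{Double}.

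For part (1) the dimension count comes for free. From the definition, $W$ is cut out by relations expressing the $V^{*}$-component of a tangent vector in terms of its $V$-component through $F$ along $U$, while leaving the directions transverse to $U$ unconstrained; a quick count gives $\dim W=\dim U+\operatorname{codim}_{V}U=\dim V$, which is exactly half of $\dim(V\oplus V^{*})$. Hence it suffices to prove $W$ is isotropic. This is the heart of the matter: plugging two generators of $W$ into $\Omega$ and expanding, the cross terms between $V$- and $V^{*}$-components reproduce the pairing with $F$, and isotropy becomes an algebraic identity relating $\omega|_{U}$ and $F$. This is precisely where the coisotropic-brane hypotheses enter. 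The coisotropy $U^{\perp_{\omega}}\subseteq U$ governs the null directions, and the Kapustin--Orlov compatibility condition on $F$ --- namely that $\omega$ and $F$ assemble into a complex structure on the symplectic reduction $U/U^{\perp_{\omega}}$ --- is what forces the remaining terms to cancel. I would verify the identity first in the space-filling model $U=V$, where $F=\omega J_{0}$ for a compatible $J_{0}$ and the computation is transparent, and then treat the general case by decomposing $W$ according to the null foliation $U^{\perp_{\omega}}$ and its $\omega$- and $F$-orthogonal complements.

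For part (2), with isotropy already in hand, I would apply the explicit $J$ to the two families of generators of $W$ (those supported on $U/U^{\perp_{\omega}}$ and those supported on the transverse and null directions) and check that each image again satisfies the defining relation of $W$. I expect this to reduce to the very same identity used in part (1): the Kapustin--Orlov condition $(\omega^{-1}F)^{2}=-\mathrm{id}$ on the reduction is literally the assertion that the graph encoded by $F$ is complex-linear for $J$, so parts (1) and (2) should be two readings of one computation. One should also confirm that $J$ actually descends to $T\times T^{\vee}$, which needs $\omega$ to identify $\Lambda$ with $\Lambda^{*}$; this integral compatibility is what the twisting in the doubling construction is designed to provide.

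The genuine obstacle I anticipate is the isotropy computation in the non-space-filling coisotropic case: the bookkeeping of the null distribution $U^{\perp_{\omega}}$, its image in $V^{*}$ under the lift, and the interaction of $F$ with the normal bundle to $U$ must be organized so that all cross terms in $\Omega|_{W}$ vanish simultaneously. A secondary point, needed for \emph{submanifold} rather than merely immersed subgroup, is that $W$ be a rational subspace of $V\oplus V^{*}$ so that its image in $T\times T^{\vee}$ is closed; I would check that this rationality is guaranteed by the integrality built into the notion of a linear object together with the twisting of the doubling lattice.
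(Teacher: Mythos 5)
Your plan is essentially the paper's own proof (Propositions \ref{LiftisLag1} and \ref{LiftisLag2}): it writes the tangent space of the lift as $T\boldsymbol{C}=\{u+Fu+\omega v \mid u\in TC,\ v\in TC_{iso}\}$ and checks isotropy of $\tfrac12\omega\oplus-\tfrac12\omega^{-1}$ by exactly the cancellation you describe, using the Kapustin--Orlov identity $\omega+F\omega^{-1}F=0$ on $TC$ together with $F|_{TC_{iso}}=0$ and $\omega|_{TC_{iso}}=0$ (in one uniform computation, so your proposed case split between the space-filling model and a decomposition along the null foliation is unnecessary), and then gets $J$-invariance from the same identity by observing that $\omega u+F\omega^{-1}Fu$ annihilates $TC$ and hence equals $\omega v'$ for some $v'\in TC_{iso}$, with the $B\neq 0$ case reduced to this by replacing $F$ with $F+B$. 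Two of your flagged concerns are non-issues: the doubling symplectic form is block diagonal (the $V$--$V^*$ cross pairing is the B-field $\sigma_0$, not part of $\Omega$, so the $F$-pairings arise inside the $-\tfrac12\omega^{-1}$ block rather than as $V$--$V^*$ cross terms), and the constant $J$ descends to any quotient torus without $\omega$ having to identify $\Lambda$ with $\Lambda^{\vee}$, while closedness of the lift is automatic from its definition as a holonomy level set, the needed integrality being that of $c_1(\nabla)$.
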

The Lagrangian Floer theory of $T$ is naturally related to the Lagrangian Floer theory of the twisted doubling torus $T\times T^\vee$ of $T$. However, the morphism spaces that we want to consider are only a certain subspace of the Floer cohomology in doubling torus, which we call the ``u-part".
The main result, informally, is that the Floer cohomology of two objects in $T$ is isomorphic to the ``u-part" Floer cohomology of the lifts in $T\times T^\vee$ and respects the Floer products.
\begin{theorem}
	For a pair of Lagrangian branes $L,\,L'$ which are mirror to a pair of line bundles $\mathcal{L},\,\mathcal{L'}$, let $\boldsymbol{L},\,\boldsymbol{L}'$ be the lifts in double torus. Suppose $\mathcal{L'}\otimes \mathcal{L}^{-1}$ is ample. Then the ``u-part" Floer cohomology $HF_u^*(\boldsymbol{L},\boldsymbol{L}')$ is isomorphic to $HF^*(L,\,L')$. And for two such pair $L,L'$ and $L',L''$, the following diagram commutes
	$$\begin{tikzcd}
	{HF^*(L',\,L'')\otimes HF^*(L,\,L')} \arrow[rr] \arrow[d, "\cong"] &  & {HF^*(L,\,L'')} \arrow[d, "\cong"] \\
	HF^*_u(\boldsymbol{L}',\,\boldsymbol{L}'')\otimes HF^*_u(\boldsymbol{L},\,\boldsymbol{L}') \arrow[rr]                                 &  & HF_u(\boldsymbol{L},\boldsymbol{L}'').
	\end{tikzcd}$$
\end{theorem}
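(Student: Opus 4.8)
The plan is to reduce both sides of the claimed isomorphism to explicit, essentially combinatorial data coming from the linear structure of the objects, and then to match the Floer products by comparing the underlying holomorphic-triangle counts. Because Theorem~\ref{LiftisLag} guarantees that the lifts $\boldsymbol{L},\,\boldsymbol{L}'$ are simultaneously linear Lagrangian and complex submanifolds of $T\times T^\vee$, I expect the Floer theory of the doubling torus to be computable in closed form, exactly as for a pair of linear Lagrangians on a symplectic torus, and I expect the ampleness of $\mathcal{L}'\otimes\mathcal{L}^{-1}$ to force the relevant complexes into a single degree so that cohomology equals the space of generators.

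First I would carry out the chain-level comparison. Using the explicit parametrization of the lifts, I would describe the intersection locus $\boldsymbol{L}\cap\boldsymbol{L}'$ inside $T\times T^\vee$ as a finite set of points indexed by the solutions of an affine-linear system, and likewise $L\cap L'$ in $T$. The doubling introduces extra directions coming from the $T^\vee$ factor, so the generators of the Floer complex of $\boldsymbol{L},\,\boldsymbol{L}'$ organize into a product of the generators of $CF^*(L,\,L')$ with a contribution from these auxiliary directions; the ``u-part'' is by definition the subspace singling out one distinguished auxiliary component. I would then verify that the Maslov grading on the u-part matches the grading of $CF^*(L,\,L')$, so that the bijection of generators is degree-preserving. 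Since $\mathcal{L}'\otimes\mathcal{L}^{-1}$ is ample, the corresponding Floer complexes are concentrated in a single degree and the differentials vanish, so the bijection of generators already induces the isomorphism $HF^*_u(\boldsymbol{L},\,\boldsymbol{L}')\cong HF^*(L,\,L')$.

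For the commuting diagram I would compute the triangle products on both sides. On $T$ the product counts holomorphic triangles with boundary on $L,\,L',\,L''$ weighted by symplectic area and by the flat holonomies, whose summation yields structure constants expressible through theta functions. On the doubling torus the u-part product counts triangles with boundary on $\boldsymbol{L},\,\boldsymbol{L}',\,\boldsymbol{L}''$ whose vertices are u-part generators. The key step is to show that every such triangle projects to a holomorphic triangle in $T$ of equal weighted area and holonomy, and conversely that each triangle in $T$ lifts uniquely to a u-part triangle; this uses the splitting of $T\times T^\vee$ together with the fact that the lifts are complex submanifolds, which constrains the $T^\vee$ component of any holomorphic triangle so that it contributes trivially to the area and holonomy. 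Matching the two families of structure constants term by term then gives commutativity of the diagram.

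The main obstacle will be the product comparison, specifically the claim that the u-part is closed under the Floer product and that no contributions from outside the u-part leak into u-part outputs. This requires controlling the auxiliary $T^\vee$ component of the moduli of holomorphic triangles in the doubling torus: I must rule out triangles whose vertices are u-part generators but whose interior forces the output into a non-u component, and ensure there is no overcounting from the extra directions. The ampleness hypothesis is what I expect to make this tractable, since it guarantees positivity of the relevant areas and the degree concentration, so that the moduli spaces are regular and the count reduces to the classical theta-function identities underlying the Polishchuk--Zaslow description of products on the mirror abelian variety.
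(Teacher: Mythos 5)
Your proposal diverges from the paper at the decisive step, and the divergence is a genuine gap rather than an alternative route. The paper never compares moduli of triangles geometrically: it computes both Floer products in closed form as Gaussian lattice sums and relates them by a Fourier-series/Poisson-summation identity (equations \eqref{thetaformula1}, \eqref{thetaformula2}), culminating in \eqref{usub}, which says $\sum_{l}s_{k,l}=\sqrt{2\det(\mathrm{Im}\,\tau(D'-D))}\,\vartheta_{D'-D,k}(u)\,\bar{\vartheta}_{D'-D,0}(v)$; the isomorphism is then $s_k\mapsto \sum_l s_{k,l}$ divided by $\sqrt{2\det(\mathrm{Im}\,\tau(D'-D))}\,\bar{\vartheta}_{D'-D,0}(0)$, i.e.\ normalization and evaluation at $v=0$. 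Your key step --- that every holomorphic triangle in $T\times T^\vee$ with u-part vertices projects to a triangle in $T$ of equal weighted area and holonomy, and conversely lifts uniquely, the $T^\vee$ component ``contributing trivially'' --- is false. The double carries $\frac12\omega\oplus-\frac12\omega^{-1}$ together with the B-field $\sigma_0$, which couples the two factors: in the paper's $d=1$ computation the weights are $e^{2\pi i(\frac{i}{4a}\tau\bar{\tau}(n+r)^2+\frac{i}{4a}(m+\hat{\theta})^2+\frac{i}{2a}\tau(n+r)(m+\hat{\theta}))}$, a double sum over $(m,n)$ with a nonvanishing cross term, whereas the count on $T$ is a single sum. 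There is no weight-preserving bijection of triangles; the two generating functions agree only after resummation, and this is exactly why the nontrivial constants $\sqrt{2a}$ (resp.\ $\sqrt{2\det(\mathrm{Im}\,\tau(D'-D))}$) and $\bar{\vartheta}_{D'-D,0}(0)$ appear. A term-by-term bijection would force all such factors to equal $1$.

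A second, related error: the u-part is not ``one distinguished auxiliary component'' of the generators. It is spanned by the sums $\sum_{l}s_{k,l}$ over all $T^\vee$-translates (the $G$-invariants in the paper's general definition), mirror to $\vartheta_{D'-D,k}(u)\bar{\vartheta}_{D'-D,0}(v)$. Moreover, the ``leakage'' you propose to rule out actually occurs: the honest Floer product of two u-part classes has $v$-component $\bar{\vartheta}_{D''-D',0}(v)\,\bar{\vartheta}_{D'-D,0}(v)$, which expands in the basis $\bar{\vartheta}_{D''-D,j}(v)$ with in general many nonzero coefficients, so the u-part is \emph{not} closed under $\mu^2$. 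The paper accommodates this by defining $\mu^2_u=\Pi_T\circ\mu^2$ with the projection built into the structure, and commutativity of the diagram is then a statement about theta-function identities: the $u$-variable structure constants reproduce those of $HF^*(L,L')$ via \eqref{usub}, while the $v$-variable factors are absorbed by $\Pi_T$ and the $\bar{\vartheta}_{\cdot,0}(0)$ normalization. So the two assertions your plan rests on --- a geometric triangle bijection and closure of the u-part under the product --- are both false as stated; the working mechanism is analytic (Poisson summation) plus the projection, not a correspondence of moduli spaces.
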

 By Theorem \ref{LiftisLag}, a coisotropic submanifold of $T$ lifts to a Lagrangian submanifold of the doubling torus which could be studied using Lagrangian Floer theory. Thus, the enlarged Fukaya category including coisotropic branes is realized as a non-full subcategory of the Fukaya category of the doubling torus. As a corollary of this doubling construction, the Fukaya category of $T$ is equivalent to the Fukaya category of the dual torus $T^\vee$.

\section{HMS for tori}

Homological mirror symmetry for tori was extensively studied by Polishchuk-Zaslow\cite{PZ}, Fukaya\cite{F}, Kontsevich and Soibelman\cite{KS}. Here we will review the construction of the mirror manifold of a symplectic torus, and how Lagrangian branes correspond to coherent sheaves on the mirror.

\subsection{SYZ fibrations and construction of mirrors}

$T$ be a torus equipped with a complex valued closed 2-form $B+i\omega$, where real part is called B-field, and where imaginary part is a (non-degenerate) symplectic form.
\begin{definition}
	A SYZ fibration for $(T,\,B+i\omega)$ is a torus fibration such that each fiber is a Lagrangian submanifold.
\end{definition}

Given a SYZ fibration of $(T,\,B+i\omega)$ with base $Q$ and fibers $F_q$, $T_qQ$ is naturally identified with $H^1(F_q;\, \mathbb{R})$ by $v\mapsto [\iota_v\omega]$, where we lift $v\in T_qQ$ to a normal vector field along $F_q$, also denoted $v$. Let $T^{\mathbb{Z}}Q\subset TQ$ be the lattice corresponding to $H^1(F_q,\, \mathbb{Z})$, and $T^*_\mathbb{Z}Q\subset T^*Q$ the dual lattice, which is natrually isomorphic to $H_1(F;\,\mathbb{Z})$. The key property of this lattice $T^*_\mathbb{Z}Q$ is that its sections are locally exact 1-forms on $Q$. Indeed, for a class $\beta\in H_1(F,\,\mathbb{Z})$, near a point $q_0\in Q$, let $x_\beta(q)=\int_{(q-q_0)\times \beta}\omega$, then $dx_\beta(v)=\int_{\beta}\iota_v\omega$ is the corresponding section of $T_\mathbb{Z}^*Q$. If $\beta_1,...,\,\beta_n$ form a basis of $H_1(F;\,\mathbb{Z})$, then the local coordinates $x_{\beta_1},...,x_{\beta_n}:\,Q\rightarrow \mathbb{R}^n$ induce an integral affine structure on $Q$, and this structure doesn't depend on the choice of basis of $H_1(F;\,\mathbb{Z})$.
\begin{assumption}
	The restriction of the B-field $B$ to the SYZ fibers vanishes.
\end{assumption}
This assumption eliminates the case with noncommutative mirrors, and allows one to construct the mirror manifold $Y$ of $(T,\,B+i\omega)$ as the moduli space of SYZ fibers equipped with flat $U(1)$ connections. $Y$ naturally admits a complex structure, specified by local coordinates
\begin{equation}\label{coord}
z_\beta(F_q,\,\nabla)=e^{2\pi i \int_{(q-q_0)\times \beta}(B+i\omega)}hol_{\nabla}(\beta)
\end{equation}
where $F_{q_0}$ is a fixed fiber, $\nabla$ is a flat $U(1)$ connection on $F_q$, and $(q-q_0)\times \beta $ is a relative homology class in $H_2(T,\,F_q\cup F_{q_0};\,\mathbb{Z})$ with boundary $\beta$ in $H_1(F_q;\,\mathbb{Z})$ and $-\beta$ in $H_1(F_{q_0};\,\mathbb{Z})$.
The tangent space of $Y$ at $(F,\,\nabla)$ is the quotient of the set of all pairs $(v,\,\alpha)\in C^{\infty}(NF)\oplus \Omega^1(F,\,\mathbb{R})$ such that $v$ is an infinitesimal Lagrangian deformation, and $\alpha$ is a closed $1$-form, viewed as an infinitesimal deformation of the flat connection, by the subspace consisting of Hamiltonian vector fields and  exact 1-forms (which correspond to trivial deformations).

\begin{lemma}
	$T_{(F_q,\,\nabla)}Y$ is identified with $H^1(F_q,\,\mathbb{C})$ via the map
	\begin{equation}
	\phi :(v,\,\alpha)\mapsto \iota_v (\omega-iB)+i\alpha.
	\end{equation}
	And the map is complex linear.
\end{lemma}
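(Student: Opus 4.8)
The plan is to reduce the entire statement to a single variational computation: the differential of the holomorphic coordinate $z_\beta$ from \eqref{coord}, evaluated on a representative pair $(v,\alpha)$, should equal, up to a fixed nonzero constant coming from the holonomy normalization, the integration pairing of $\phi(v,\alpha)\in H^1(F_q;\mathbb{C})=\mathrm{Hom}(H_1(F_q;\mathbb{Z}),\mathbb{C})$ against the class $\beta$. Concretely I would aim to prove
\[
 d\log z_\beta\,(v,\alpha)\;=\;-2\pi\Big(\int_\beta \iota_v(\omega-iB)\;+\;i\int_\beta\alpha\Big)\;=\;-2\pi\,\langle\phi(v,\alpha),\beta\rangle .
\]
Once this identity is in hand, well-definedness on the quotient, bijectivity, and $\mathbb{C}$-linearity all follow formally, so the real content is the variational formula.

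First I would compute the two contributions to $\frac{d}{dt}\log z_\beta$ along a path $(F_{q_t},\nabla_t)$ with initial velocity $(v,\alpha)$. For the area term $2\pi i\int_{(q_t-q_0)\times\beta}(B+i\omega)$ the domain is the chain swept by a loop representing $\beta$ as the base point moves in the direction $v$; since $d(B+i\omega)=0$, Stokes' theorem identifies its first variation with the flux $2\pi i\int_\beta \iota_v(B+i\omega)$ through the boundary loop. For the holonomy term, deforming the flat connection by the closed $1$-form $\alpha$ changes $\log hol_{\nabla}(\beta)$ by $-2\pi i\int_\beta\alpha$ in the normalization for which \eqref{coord} is holomorphic. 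Adding these and regrouping $\iota_v(B+i\omega)=\iota_v B+i\,\iota_v\omega$ produces exactly $-2\pi\langle\phi(v,\alpha),\beta\rangle$. I expect this variational step, in particular making rigorous the derivative of the integral over the chain that degenerates at $t=0$ and pinning down the holonomy sign convention, to be the main obstacle; everything else is bookkeeping.

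With the identity established I would finish as follows. Well-definedness: since $\log z_\beta$ is a genuine local function on $Y$, its differential annihilates trivial deformations, so $\langle\phi(v,\alpha),\beta\rangle=0$ for every $\beta$ whenever $(v,\alpha)$ is a Hamiltonian vector field plus an exact $1$-form; as $\beta$ ranges over a basis of $H_1(F_q;\mathbb{Z})$ this forces $\phi(v,\alpha)=0$, so $\phi$ descends to the quotient $T_{(F_q,\nabla)}Y$. Bijectivity: the real part of $\phi$ recovers the already-fixed isomorphism $v\mapsto[\iota_v\omega]$ on Lagrangian deformations and vanishes on connection deformations, while the imaginary part restricts to $\alpha\mapsto[\alpha]$ on connection deformations; in the resulting block decomposition of $H^1\oplus H^1$ the map $\phi$ is lower triangular with identity diagonal blocks, hence a real-linear isomorphism onto $H^1(F_q;\mathbb{C})$.

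Finally, for $\mathbb{C}$-linearity: by definition of the complex structure on $Y$ the coordinates $z_\beta$ are holomorphic, so each $d\log z_\beta$ is a $(1,0)$-form and satisfies $d\log z_\beta(J_Y\,\cdot)=i\,d\log z_\beta(\cdot)$, where $J_Y$ is the complex structure on $Y$. Feeding this into the identity above and using that $\langle\,\cdot\,,\beta\rangle$ is $\mathbb{C}$-linear on $H^1(F_q;\mathbb{C})$ yields $\langle\phi(J_Y\,\cdot),\beta\rangle=i\,\langle\phi(\cdot),\beta\rangle$ for all $\beta$, that is $\phi\circ J_Y=i\,\phi$, which is precisely the asserted complex linearity.
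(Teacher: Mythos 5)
Your proposal is correct and takes essentially the same route as the paper: the paper's proof also rests on the single identity $d\log z_\beta(v,\alpha)=-2\pi\int_\beta\bigl(\iota_v(\omega-iB)+i\alpha\bigr)$ to conclude complex linearity, and establishes bijectivity via the Arnold--Liouville identification $v\mapsto[\iota_v\omega]$ together with the observation that exact $1$-forms are gauge transformations. You merely fill in details the paper leaves implicit, namely the Stokes/holonomy derivation of that identity and the explicit triangular treatment of the cross-term $-\iota_v B$.
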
 

\begin{proof}
	The Arnold-Liouville theorem implies that there are canonical identifications 
	\begin{equation}
	T_qQ \cong H^1(F_q;\,\mathbb{R}),\, v\mapsto [\iota_v \omega]
	\end{equation}
	where $v$ is lifted from $T_qQ$ to $C^{\infty}(NF)$, so $v\mapsto [\iota_v \omega]$ maps bijectively to $H^1(F_q;\,\mathbb{R})$. And an exact $1$-form $\alpha$ is simply a gauge transformation which do not contribute to the deformation of connections, so the deformations of connections are classified by cohomology class of the 1-form $\alpha$. So $\phi$ is a bijection.
	To verify $\phi$ is complex linear, we see that
	\begin{equation}
	dlog(z_\beta)(v,\,\alpha)=-2\pi  \int_{\beta}(\iota_v (\omega-iB)+i\alpha)
	\end{equation}
	is complex linear for every holomorphic coordinate function $z_\beta$. So $\phi$ is complex linear. 
\end{proof}

In fact, using the coordinates from \eqref{coord} and observing that interior product with $B+i\omega$ defines an (injective) linear map $H_1(Q) \rightarrow H^1(F;\,\mathbb{C})$, we can identify the mirror manifold $Y$ with
\begin{equation}
H^1(F;\,\mathbb{C})/H^1(F;\,\mathbb{Z})+(B+i\omega) H_1(Q;\,\mathbb{Z}).
\end{equation} 
\begin{example}\label{stdex}
	For $T=(\mathbb{R}/\mathbb{Z})^{2n}$, $B+i\omega=\tau dr\wedge d\theta=\sum \tau_{jk}dr_j\wedge d\theta_k (\tau \in M_{n\times n}(\mathbb{C}))$, with SYZ fibers $F=\{r\}\times T_\theta ^n$, the mirror complex torus is $E=\mathbb{C}^n/(\mathbb{Z}^n+\tau^{T} (\mathbb{Z}^n))$.
\end{example}

\subsection{Floer cohomology and sheaf cohomology}

\begin{figure}
	\centering 
	\includegraphics[scale=0.8]{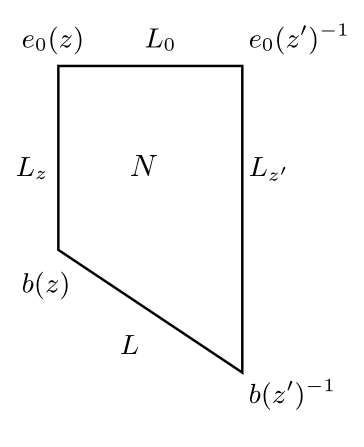}
	\caption{The trapezoid bound by $L_0$, $L_z$, $L$ and $L_{z'}$. }\label{holo}
\end{figure}

\begin{definition}
	A linear Lagrangian brane in $(T,\, B+i\omega)$ is a linear Lagrangian submanifold $L\subset (T,\, \omega)$ (with a choice of grading and spin structure) together with a complex line bundle over $L$ with a unitary connection $\nabla$ whose curvature satisfies $F=-B|_L$.
\end{definition}
\begin{assumption}
	Assume there exists a linear Lagrangian brane $L_0$ with trivial local system such that $L_0\cap F_q$ at only 1 point for each $q\in Q$.
\end{assumption}
This $L_0$ will serve as the mirror of structure sheaf on $Y$. In the case of Example \ref{stdex}, we simply choose $L_0=\{\theta=0\}$. The points of the mirror torus parametrize Lagrangian branes $L_z=(F_q,\, \nabla)$ supported on SYZ fibers, and the generators $e_0(z)\in CF(L_0,\, L_z)$ are chosen to serve as evaluation map of the structure sheaf at $z$.
Given a linear Lagrangian brane $(L,\, \nabla)$ in $(T,\, B+i\omega)$, assume $L$ is transversal to $F_q$ for each $q\in Q$. Now we construct its mirror sheaf to be a vector bundle
\bee
\mathcal{L}=\cup_{z\in Y}CF^*(L_z,\, L)\rightarrow Y.
\eee
When we move $L_z$, let $x(z)$ be a local continuous section of the intersection $L_z\cap L$, and let $b(z)\in CF(L_z,\, L)$ be a rescaling of $x(z)\in L_z\cap L$. Then $b(z)$ is a local section of $\mathcal{L}$.
\begin{definition}
	$\mathcal{L}$ admits a natural holomorphic structure such that, if locally for any $z,z'\in Y$, and N is a trapezoid bound by $L_0$, $L_z$, $L$, $L_{z'}$, see figure \ref{holo},
	\bee
	e^{2\pi i\int_{N}B+i\omega}hol(\partial N)\equiv 1,
	\eee
	then $b(z)$ is a holomorphic section.
	Here $hol(\partial N)$ is the composition of $e_0$, parallel transport along $L_z$, $b(z)$,parallel transport along $L$, $b(z')^{-1}$, parallel transport along $L_{z'}$, $e_0(z')^{-1}$ and parallel transport along $L_0$. 
\end{definition}

\begin{remark}
	These holomorphic sections uniquely determine the holomorphic structure of $\mathcal{L}$. Here we only consider linear Lagrangians, in which case the Floer differentials are automatically zero, so the corresponding mirror sheaves are holomorphic vector bundles. In general, with Floer differential, one obtains chain complexes of locally free sheaves \cite{F} \cite{AbF}. 
\end{remark}

Under homological mirror symmetry, we have 

\begin{theorem}\cite{F} \cite{KS}
	\bee
	HF^*(L_1,L_2)\cong Ext^*(\mathcal{L}_1,\mathcal{L}_2)
	\eee
	And the following diagram commutes:  \\
	
	\bee
	\begin{tikzcd}
	{HF^*(L_2,\,L_3)\otimes HF^*(L_1,\,L_2)} \arrow[rr, "\mu_2"] \arrow[d, "\cong"]                 &  & {HF^*(L_1,L_3)} \arrow[d, "\cong"]    \\
	{Ext^*(\mathcal{L}_2,\,\mathcal{L}_3)\otimes Ext^*(\mathcal{L}_1,\,\mathcal{L}_2)} \arrow[rr] &  & {Ext^*(\mathcal{L}_1,\,\mathcal{L}_3)}
	\end{tikzcd}
	\eee

\end{theorem}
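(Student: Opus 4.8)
The plan is to reduce both sides to explicit linear data in the model of Example~\ref{stdex} and match them degree by degree. Since the $L_i$ are linear, the Floer differential vanishes (preceding remark), so $HF^*(L_i,L_j)=CF^*(L_i,L_j)$ is freely generated by the transverse points of $L_i\cap L_j$, each carrying a Maslov grading and a sign from the spin structures. On the mirror side the $\mathcal{L}_i$ are holomorphic line bundles on the abelian variety $Y$, and $\mathrm{Ext}^*(\mathcal{L}_1,\mathcal{L}_2)=H^*(Y,\mathcal{L}_1^\vee\otimes\mathcal{L}_2)$. By the index theorem for line bundles on abelian varieties (Mumford), this cohomology is concentrated in a single degree $i$ equal to the index of $\mathcal{L}_1^\vee\otimes\mathcal{L}_2$, with dimension the absolute Pfaffian of the polarization. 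First I would check that the Maslov grading is constant on $L_1\cap L_2$ and equals $i$, and that $\#(L_1\cap L_2)$ equals that Pfaffian, giving the numerical matching degree by degree.

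Second I would upgrade the count to an actual isomorphism using the family-Floer mirror functor set up in the excerpt, in which the fiber of $\mathcal{L}_i$ over $z\in Y$ is $CF^*(L_z,L_i)$. An element $x\in CF^*(L_1,L_2)$ induces the fiberwise map $\mu_2(x,-)\colon CF^*(L_z,L_1)\to CF^*(L_z,L_2)$, hence a smooth bundle map $\Phi(x)\colon\mathcal{L}_1\to\mathcal{L}_2$ over $Y$. The crucial point is that $\Phi(x)$ is holomorphic: the defining trapezoid condition $e^{2\pi i\int_N(B+i\omega)}\,hol(\partial N)\equiv 1$ of Figure~\ref{holo} is exactly the cocycle comparing neighbouring fibers, so associativity of $\mu_2$ forces $\bar\partial\Phi(x)=0$. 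Thus $\Phi$ maps $HF^*(L_1,L_2)$ into $H^0(\mathcal{L}_1^\vee\otimes\mathcal{L}_2)=\mathrm{Hom}(\mathcal{L}_1,\mathcal{L}_2)$ in degree $0$, and the same argument in each degree (after replacing $L_z$ by the appropriate family) produces the map on all of $\mathrm{Ext}^*$; bijectivity then follows from the dimension count of the first step.

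Third, to make $\Phi$ explicit and to prepare the product, I would evaluate $\Phi(x)$ in the coordinates \eqref{coord}: the unique holomorphic triangle with a vertex at $x$, weighted by $\exp(2\pi i\int(B+i\omega))$ and by the connection holonomy, assembles into a Gaussian times a lattice sum, i.e. a theta function with characteristics determined by $x$. This reproduces the Appell--Humbert description of the sections of $\mathcal{L}_1^\vee\otimes\mathcal{L}_2$ and identifies $\{\Phi(x)\}_{x\in L_1\cap L_2}$ with the standard theta basis.

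Finally, the commuting square reduces to matching the Floer product with the Yoneda/composition product, and this is where I expect the main obstacle. On the Fukaya side $\mu_2$ counts holomorphic triangles with vertices at the three families of intersection points, each weighted by $\exp(2\pi i\int_\Delta(B+i\omega))$ and holonomy; on the sheaf side composition is multiplication of theta functions. The heart of the proof is the classical theta addition formula: the area-weighted sum over triangle classes with fixed vertices is exactly the structure constant in the product of the corresponding thetas. I would verify this lattice identity and then check, separately, that the Maslov indices add correctly and that the spin signs reproduce the signs in the multiplication formula, so that the square commutes with orientations. The analytic inputs --- transversality, regularity of the triangle moduli, and convergence of the area-weighted sums --- are standard for linear Lagrangians and proceed as in \cite{PZ,F,KS}; the genuinely delicate part is the simultaneous, coherent bookkeeping of characteristics, gradings and signs across all three morphism spaces at once.
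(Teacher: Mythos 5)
You should first note that the paper does not actually prove this theorem: it is imported from Fukaya \cite{F} and Kontsevich--Soibelman \cite{KS}, and what the paper itself supplies is the definition of the holomorphic structure on $\mathcal{L}$ via the trapezoid criterion, the elliptic-curve theta example, and, in Section 4, the explicit identification of the triangle-counting series $e_D\circ s_{(D,\,k)}$ with the theta functions $\vartheta_{D,\,k}$. Your sketch reconstructs the same standard route as those cited sources (vanishing differential for linear Lagrangians, the fiberwise map $\mu_2(x,-)\colon CF^*(L_z,L_1)\to CF^*(L_z,L_2)$ assembling into a bundle map, identification with a theta basis, theta addition formulas for the product), so in outline it agrees with the proof the paper is pointing to rather than diverging from it.

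That said, two of your steps have genuine gaps relative to the statement as given. First, your numerical matching assumes $L_1\pitchfork L_2$ and that $\mathcal{L}_1^{\vee}\otimes\mathcal{L}_2$ is nondegenerate, which is what Mumford's index theorem requires; but the theorem is stated for arbitrary linear branes. For $L_1=L_2$, or any flat difference bundle, the intersection is clean rather than transverse, $Ext^*(\mathcal{L},\mathcal{L})\cong H^*(Y,\mathcal{O}_Y)$ is spread over all degrees, and the ``single degree, Pfaffian-many points'' count collapses; this case needs the clean-intersection model $HF^*(L,L)\cong H^*(L;\mathbb{C})$, exactly what the paper invokes in Section 5, and it is not a limit of your transverse argument. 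Relatedly, you silently take the $\mathcal{L}_i$ to be line bundles, whereas the paper's construction $\mathcal{L}=\cup_{z\in Y}CF^*(L_z,L)$ has rank $\#(L\cap F_q)$, so Lagrangians meeting the SYZ fiber in several points produce higher-rank (semi-homogeneous) bundles, and the Appell--Humbert/Pfaffian bookkeeping must be redone, e.g.\ by isogeny pullback. Second, the claim that ``associativity of $\mu_2$ forces $\bar\partial\Phi(x)=0$'' is not an argument as it stands: in this framework the holomorphic structure on $\mathcal{L}_i$ is \emph{defined} by the trapezoid condition, so holomorphicity of $\Phi(x)$ is not formal but computational --- one must verify that the $B+i\omega$-weighted, holonomy-twisted triangle series satisfies that condition, which is precisely the lattice computation you defer to your step 3; and bijectivity additionally needs linear independence of the resulting theta functions with distinct characteristics, not merely equality of dimensions. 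None of these gaps is fatal --- they are exactly where \cite{PZ}, \cite{F}, \cite{KS} spend their effort --- but as written your steps 1--2 presuppose them rather than proving them.
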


The above theorem is best illustrated by the example of theta functions on the elliptic curve.
\begin{figure}	\label{line1}
	\centering 
	\includegraphics[scale=1]{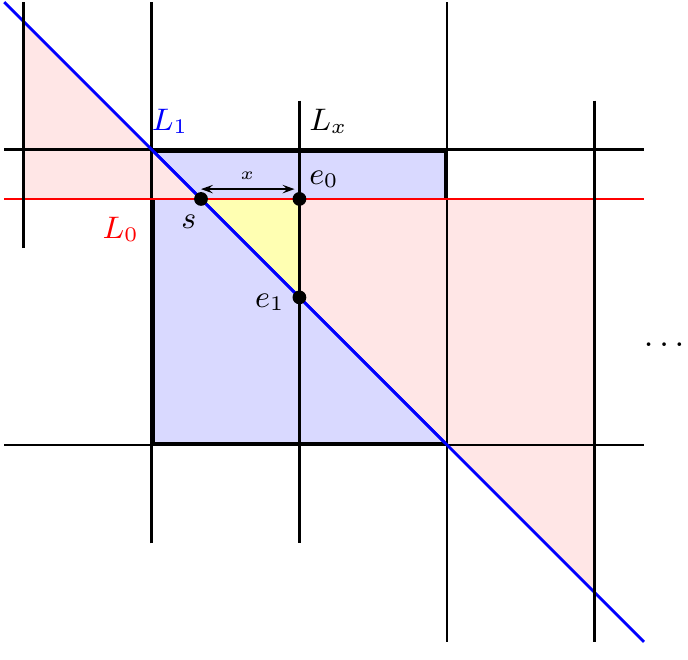}
		\caption{Holomorphic triangles computing the Floer product $\mu^2(e_1,s)\in HF^*(L_0,L_x)$. }\label{holo1}
\end{figure}

\begin{example}
	Let $T=\mathbb{R}^{2}/\mathbb{Z}^{2}$, $B+i\omega=\tau dr\wedge d\theta$ be a symplectic two torus with coordinates $r,\,\theta$. Let $L_0$ be a horizontal Lagrangian $\{\theta=0\}$ (mirror to the structure sheaf), $L_d$ be a slope $-d$ Lagrangian $\{\theta=-dr\}$ (mirror to a degree d line bundle $\mathcal{L}^d$), $L_z$ be a vertical Lagrangian with position $x$ and a connection with holonomy $e^{-2\pi iy}$, $(\{r=x\},\,\nabla=d+2\pi iyd\theta)$, where $z=x+iy$. See figure \ref{holo1} for the case $d=1$.
	The generators $s_k=(\frac{k}{d},\,0)\in L_0\cap L_d$ of 
	$HF(L_0,\,L_d)$ correspond to the $\vartheta$-basis of $H^0(E,\,\mathcal{L}^d)$
	
	$$\vartheta_{d,\,k} =\sum_{n\in \mathbb{Z}} e^{\pi\tau id (n-\frac{k}{d})^2}e^{2\pi id(n-\frac{k}{d}) z}.$$
\end{example}

\subsection{Coisotropic branes}

Kapustin and Orlov introduce the following notion of coisotropic brane, motivated by a string theoretical calculation.

\begin{definition}
	Given a symplectic manifold $(X^{2n},\,B+i\omega)$, a coisotropic brane is a coisotropic submanifold $C^{n+k}$ equiped with a complex line bundle $(\mathcal{L},\,\nabla)$ such that
	\begin{enumerate}
		\item   Let $-2\pi iF$ be the curvature of $(\mathcal{L},\,\nabla)$, then $F+B|_C$ vanishes on $TC_{iso}= ker\ \omega|_C$, where $F+B|_C$ is viewed as a bundle morphism $TC\rightarrow TC^*$. In particular, $F=-B|_C$ along the isotropic leaves (foliated by $ker\, \omega|_C=TC_{iso}$).
		\item  $\omega^{-1}(F+B|_C)$ defines a transverse almost complex structure on $C$, i.e. an almost complex structure on $TC_{red}=TC/TC_{iso}$. Equivalently, $\omega+(F+B)\omega^{-1}(F+B)=0$ on $TC$
	\end{enumerate}
	
\end{definition}
\begin{remark}
	The second condition implies that $F+B+i\omega$ is a holomorphic symplectic form on the space of isotropic leaves, hence forces $k$ to be even. In the case $n=2,\,k=2$ and $B=0$, we have a space filling coisotropic brane, and this condition is equivalent to $$\omega\wedge F=0,\, \omega\wedge\omega=F\wedge F.$$
	Since $F\wedge F$ represents an integral cohomology class, coisotropic branes can only arise for some special $\omega$.
\end{remark}
\begin{remark}
	The transverse almost complex structure arising from the geometry of the coisotropic brane is always integrable \cite{KO}.
\end{remark}

\begin{example}
	Let $(T=\mathbf{R}^4/\mathbf{Z}^4,\, \omega=dr_1\wedge d\theta_1 + dr_2\wedge d\theta_2)$ be the standard symplectic four torus. 
	Then $(C=T,\,\nabla=d+2\pi ir_1d\theta_2-2\pi ir_2d\theta_1)$ is a coisotropic brane. And the induced complex stucture has complex coordinates $r_1-ir_2$, $\theta_1+i\theta_2$.
\end{example}

\noindent
Kapustin and Orlov have made a proposal for the endomorphisms of a coisotropic brane, namely $End(C)\simeq H^{0,\,*}(C)$, where Dolbeault cohomology is considered with respect to the transverse complex structure, but until now it was not understood how to define morphisms between different branes. In the next sections we propose a definition based on a ``doubling" construction.

\section{Doubling and lifting}

\subsection{Construction of the twisted double torus and lift of coisotropic branes   }
\subsubsection{Construction for symplectic torus without B-field}

Let $(T=V/\Lambda, \,\omega)$ be a linear symplectic torus. Let $(T^\vee=V^\vee /\Lambda^\vee, \,-\omega^{-1})$ be the dual torus with the inverse symplectic form $-\omega^{-1}(\alpha, \,\beta):=\alpha(\omega^{-1}\beta)$.

We introduce the following doubling procedure:

\begin{definition}\label{Double}
	The twisted doubling torus of $(T, \,\omega)$ is a symplectic torus with a B-field
$$(T\times T^\vee, \  \frac{1}{2}\omega\oplus-\frac{1}{2}\omega^{-1}, \ \sigma_0=\sum \frac{1}{2} dx_j\wedge d\hat{x}_j)$$
	where $x_j$ are coordinates on $T$ and $\hat{x_j}$ are the dual coordinates on $T^\vee$. $\sigma_0=\sum \frac{1}{2} dx_j\wedge d\hat{x}_j$ is the B-field which does not depend on the choice of coordinates.
\end{definition}

\begin{remark}\label{AS}
	The twisted doubling torus naturally comes with a complex structure $J$ which sends a tangent vector ${v}$ in $T$ to its symplectic dual $v \lrcorner \omega$ as a tangent vector of $T^\vee$. In matrix notation,
	
	$$ J=\left( \begin{array}{ccc}
	0 & \omega^{-1} \\
	-\omega & 0
	\end{array}\right).$$
	
\end{remark}

\begin{example}
	Let $T=\mathbb{R}^4/\mathbb{Z}^4$, $\omega=dr_1\wedge d\theta_1+dr_2\wedge d\theta_2$. Its dual torus is $T^\vee=\mathbb{R}^4/\mathbb{Z}^4$, $-\omega^{-1}=d\hat{r}_1\wedge d\hat{\theta}_1+d\hat{r}_2\wedge d\hat{\theta}_2$. The twisted double torus is $T\times T^\vee=\mathbb{R}^4/\mathbb{Z}^4\times \mathbb{R}^4/\mathbb{Z}^4$, $ \frac{1}{2}\omega\oplus-\frac{1}{2}\omega^{-1}=\frac{1}{2}(dr_1\wedge d\theta_1+dr_2\wedge d\theta_2+d\hat{r}_1\wedge d\hat{\theta}_1+d\hat{r}_2\wedge d\hat{\theta}_2)$ and $B$ as above.
	
\end{example}

One can lift a linear Lagrangian brane or a coisotropic brane of $(T, \,\omega)$ to a Lagrangian brane in the doubling torus.\\

For a Lagrangian $(L, \,\nabla)$, its lift is $\boldsymbol{L}=(L\times L^\perp, \,\nabla\otimes \mathds{1})$, where $L^\perp$ is the conormal of $L$ translated by the holonomy of $\nabla$.\\

For a coisotropic brane $(C, \,\nabla)$, its lift $\boldsymbol{C}$ is a graph over $T$ in the doubling torus determined by the holonomy of $\nabla$.

\begin{definition}
	The lift of a coisotropic (posibly Lagrangian) brane $(C, \,\nabla)$ is defined to be
	$$\{(x, \,\hat{x})\in T\times T^{\vee}|x\in C\  \text{and} \
	\langle\hat{x}, \,\gamma_x\rangle =(-1)^{\xi(\gamma_x)}hol_\nabla(\gamma_x),\ \forall\ \gamma_x \in \pi_1(C,x)\},\quad \pi_T^*\nabla$$
	where $\gamma_x$ is any linear circle passing through $x$ within $C$. And $\xi:\,H_1(C)\rightarrow \mathbb{Z}/2$ is such that
	\bee
	\xi(\gamma+\gamma')-\xi(\gamma)-\xi(\gamma')=c_1(\nabla)(\gamma\wedge \gamma') \ mod \ 2
	\eee
\end{definition}

\begin{remark}
	$\xi$ is introduced to make sure $(-1)^{\xi(\cdot)}hol_\nabla(\cdot)$ is a homomorphism: $\pi_1(C, \,x)\rightarrow U(1)$.
	$\xi$ has a similar role to the spin structure.
	The space of different choices of $\xi$ is an affine space over $H^1(C; \,\mathbb{Z}/2)$.
\end{remark}

\begin{proposition}\label{LiftisLag1}
	The lift of a coisotropic (possibly Lagrangian) brane is a Lagrangian brane in the doubling torus. And it is also a $J$-complex (see remark \ref{AS}) submanifold of $T\times T^\vee$.
\end{proposition}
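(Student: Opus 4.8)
The plan is to reduce the statement to linear algebra on the tangent space $T\boldsymbol{C}$, organised around the symplectic reduction $TC_{red}=TC/TC_{iso}$, on which the two defining conditions of a coisotropic brane collapse into the single statement that $J_{red}:=\omega^{-1}F$ is a complex structure. First I would compute $T\boldsymbol{C}$ by differentiating its defining holonomy constraint. A tangent vector is a pair $(w,\mu)\in V\oplus V^\vee$; since $x\in C$ we must have $w\in TC$, and differentiating $\langle\hat x,\gamma_x\rangle=(-1)^{\xi(\gamma_x)}\mathrm{hol}_\nabla(\gamma_x)$ along the lift shows that $\mu$ is constrained only on $TC$, by $\mu|_{TC}=\iota_w F$ (the basepoint-derivative of the log-holonomy of a linear loop of class $\beta$ in direction $w$ is $F(w,\beta)$). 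The transverse part of $\mu$, lying in the annihilator $(TC)^\perp$, is free, so $\dim T\boldsymbol{C}=(n+k)+(n-k)=2n$, half of $\dim(T\times T^\vee)$, as needed for a Lagrangian.

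Next I would feed in the coisotropic conditions. Condition (1), which with $B=0$ says $F$ vanishes on $TC_{iso}=\ker\omega|_C$ as a map $TC\to(TC)^*$, forces $\mu|_{TC_{iso}}=(\iota_w F)|_{TC_{iso}}=0$, hence $\mu\in(TC_{iso})^\perp=\omega(TC)$ and $\omega^{-1}\mu\in TC$. This is exactly what makes $J(w,\mu)=(\omega^{-1}\mu,-\iota_w\omega)$ land back in $TC\oplus V^\vee$ with first component in $TC$. To check $J(T\boldsymbol{C})\subseteq T\boldsymbol{C}$ I would pass to $TC_{red}$, on which $\omega,F$ descend to $\bar\omega,\bar F$ and condition (2) becomes the statement that $J_{red}=\bar\omega^{-1}\bar F$ satisfies $J_{red}^2=-1$. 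Under reduction the constraint $\mu|_{TC}=\iota_w F$ reads $\overline{\omega^{-1}\mu}=J_{red}\bar w$, and the remaining requirement $(-\iota_w\omega)|_{TC}=\iota_{\omega^{-1}\mu}F$ becomes $-\bar\omega(\bar w,\cdot)=\bar F(J_{red}\bar w,\cdot)=\bar\omega(J_{red}^2\bar w,\cdot)$, which holds by $J_{red}^2=-1$. Since $J^2=-1$ and dimensions agree, $J(T\boldsymbol{C})=T\boldsymbol{C}$, giving the complex-submanifold claim.

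For the Lagrangian property I would evaluate $\Omega=\tfrac12\omega\oplus-\tfrac12\omega^{-1}$ on two tangent vectors $(w_i,\mu_i)$. Using $\omega^{-1}\mu_2\in TC$ and $\mu_1|_{TC}=\iota_{w_1}F$ gives $\mu_1(\omega^{-1}\mu_2)=F(w_1,\omega^{-1}\mu_2)$, which after reduction equals $\bar\omega(J_{red}\bar w_1,J_{red}\bar w_2)$. The identity $\bar\omega(J_{red}a,J_{red}b)=-\bar\omega(a,b)$---automatic from antisymmetry of $\bar F$, equivalently self-adjointness of $J_{red}$ for $\bar\omega$---then yields $\Omega((w_1,\mu_1),(w_2,\mu_2))=\tfrac12\omega(w_1,w_2)-\tfrac12\omega(w_1,w_2)=0$, so $\boldsymbol{C}$ is Lagrangian. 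The brane structure is checked directly: the curvature of $\pi_T^*\nabla$ is $\pi_T^*F$, while a one-line computation gives $\sigma_0|_{\boldsymbol{C}}=-F$ on $T\boldsymbol{C}$, so $F_{\pi_T^*\nabla}=-\sigma_0|_{\boldsymbol{C}}$ as demanded of a brane.

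I expect the principal difficulty to be the first step: differentiating the holonomy constraint to obtain $\mu|_{TC}=\iota_w F$ with the correct sign and normalization, and keeping the ambient isomorphism $\omega^{-1}\colon V^\vee\to V$ consistent with the reduced structure on $TC_{red}$. Once $T\boldsymbol{C}$ is correctly identified and the reduction is set up, both conclusions are short consequences of $J_{red}^2=-1$, with the two coisotropic conditions entering precisely to place $\omega^{-1}\mu$ in $TC$ and to furnish the complex structure on $TC_{red}$ respectively.
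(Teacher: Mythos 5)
Your proof is correct, and while it follows the same overall strategy as the paper --- linearize the lift and verify the Lagrangian and $J$-invariance conditions on $T\boldsymbol{C}$ --- the execution differs in ways worth recording. The paper parametrizes the tangent space explicitly as $T\boldsymbol{C}=\{u+Fu+\omega v:\ u\in TC,\ v\in TC_{iso}\}$, which requires choosing a lift of $Fu\in T^*C$ to $T^*T$ (an ambiguity the paper must address in a separate remark), and then verifies both claims via the matrix identity $\omega+F\omega^{-1}F=0$ on $TC$, i.e. the ``equivalently'' formulation of condition (2). You instead work with the implicit description $T\boldsymbol{C}=\{(w,\mu):\ w\in TC,\ \mu|_{TC}=\iota_wF\}$ --- which is exactly the alternative the paper's remark records but does not use --- and push everything down to the reduction $TC_{red}$, where condition (2) is literally $J_{red}^2=-1$ for $J_{red}=\bar\omega^{-1}\bar F$. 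This buys you a lift-free argument in which condition (1) enters transparently (it makes $\bar F$ well defined on $TC_{red}$ and places $\omega^{-1}\mu$ in $TC$), and both conclusions collapse to $J_{red}^2=-1$ together with the anti-compatibility $\bar\omega(J_{red}a,J_{red}b)=-\bar\omega(a,b)$, which you correctly derive from antisymmetry of $\bar F$; the paper's parametrized computation is more mechanical but transports verbatim to the B-field case of Proposition \ref{LiftisLag2} by the substitution $F\mapsto F+B$, which is how the paper reuses it. Two further points: your final cancellation in the Lagrangian check comes out to zero only with the paper's sign convention $-\omega^{-1}(\alpha,\beta):=\alpha(\omega^{-1}\beta)$ on the dual factor (you flagged the sign issue, and your bottom line is consistent with that convention); and your verification that $\sigma_0|_{\boldsymbol{C}}=-\pi_T^*F$ on $T\boldsymbol{C}$, so that the curvature of $\pi_T^*\nabla$ satisfies the B-field condition, is a genuine addition --- the proposition asserts the lift is a Lagrangian \emph{brane}, and the paper's proof checks only the Lagrangian and $J$-complex conditions, leaving the brane condition implicit.
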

\begin{proof}
	Let $(C, \,\nabla)$ be a coisotropic brane, $\boldsymbol{C}$ be its lift. By linearizing the definition of the lift, we have
	$$T\boldsymbol{C}=\{u+Fu+\omega v| u\in TC, v \in TC_{iso}\}
	=\left(\begin{array}{ll}
	1\ &0 \\
	F\ &1
	\end{array}\right)
	\left(\begin{array}{ll}
	& TC \\
	& \omega\ TC_{iso}
	\end{array}\right), $$
	where $F$ denote the curvature of $\nabla$.
	Given two tangent vectors $u_1+Fu_1+\omega v_1$, $u_2+Fu_2+\omega v_2$ of $T\boldsymbol{C}$,
	\begin{equation*}
	\begin{aligned}
	&\omega\oplus-\omega^{-1}(u_1+Fu_1+\omega v_1,\ u_2+Fu_2+\omega v_2) \\
	=&\omega(u_1,\,u_2)-\omega^{-1}(Fu_1+\omega v_1,\,Fu_2+\omega v_2) \\	=&\omega(u_1,\,u_2)-\omega^{-1}(Fu_1,\,Fu_2)-\omega^{-1}(\omega v_1,\,Fu_2)-\omega^{-1}(Fu_1,\,\omega v_2)-\omega^{-1}(\omega v_1,\,\omega v_2)\\
	=&(\omega+F\omega^{-1}F)(u_1,\,u_2)+F(v_1,\,u_2)+F(u_1,\,v_2)+\omega(v_1,\,v_2)\\
	=&0
	\end{aligned}
	\end{equation*}
	The last line equals $0$ because $\omega+F\omega^{-1}F=0$ and $F$ is zero on $TC_{iso}$. So the lift $\boldsymbol{C}$ is Lagrangian. \\
		
	To prove that $\boldsymbol{C}$ is a $J$-complex submanifold of $T\times T^\vee$,
	we observe that, for $u \in TC$,
	$\omega u +F\omega^{-1}Fu $ vanishes on $TC$,
	hence equals $\omega v'$ for some $v'\in TC_{iso}$. Therefore,
	\begin{equation*}
	\begin{aligned}
	&J(u+Fu+\omega v)=\omega^{-1}Fu+v-\omega u \\
	=&\omega^{-1}Fu+v+F\omega^{-1}F u -\omega v'\\
	=&(\omega^{-1}Fu+v)+F(\omega^{-1}Fu+v) -\omega v' \in T\boldsymbol{C}.
	\end{aligned}
	\end{equation*}
	
\end{proof}

\begin{remark}
	There is an ambiguity when writing $Fu$ as a vector of $TT^\vee$, in fact $Fu\in T^*C$ whose lift to $T^*T$ is only unique up to a vector in $\omega TC_{iso}$. In the above calculation, a lift of $Fu$ to $T^*T$ is chosen. Another way is to write $T\boldsymbol{C}=\{u+f|<f,\,v>=F(u,\,v)\ \forall v\in TC\}$.
\end{remark}

\subsubsection{Construction for symplectic torus with B-field}
Now we deal with the more general situation when the symplectic torus starts with a B-field. Let $(T=V/\Lambda,\, \omega,\, B)$ be a linear symplectic torus equipped with a B-field $B\in H^2(T;\,\mathbb{R})$.
\begin{assumption}\label{assump}
	$Id+(\omega^{-1}B)^2$ is invertible.
\end{assumption}
The dual torus is  $(T^\vee=V^\vee /\Lambda^\vee,\ -(\omega+B\omega^{-1}B)^{-1},\  (\omega+B\omega^{-1}B)^{-1}B\omega^{-1})$.

\begin{remark}
	These formular for the symplectic form and B-field on the dual torus are rather confusing at first sight. In fact, they are the imaginary part and real part of $(B+iw)^{-1}$. Indeed,
\bee\ba
(B+i\omega)^{-1}&=-i(Id-i\omega^{-1}B)^{-1}\omega^{-1}=-i\sum_{k\geq 0} (i\omega^{-1} B)^k \omega^{-1}\\
&=-i(\omega+B\omega^{-1}B)^{-1}+(\omega+B\omega^{-1}B)^{-1}B\omega^{-1}. \ea\eee
\end{remark}

\begin{definition}
	The twisted doubling torus of $(T,\,\omega,\, B)$ is a symplectic torus with a B-field
	\begin{equation*}
	(T\times T^\vee,\quad  \frac{1}{2}
	\left(\begin{array}{cc}
	 \omega+B\omega^{-1}B & B\omega^{-1}\\
		 -\omega^{-1}B & -\omega^{-1}
		\end{array}\right),
		\quad
		 \sigma_0=\sum_j \frac{1}{2} dx_j\wedge d\hat{x}_j)	
	\end{equation*}
	where $x_j$ are coordinates on $T$ and $\hat{x}_j$ are the dual coordinates on $T^\vee$. $\sigma_0=\sum \frac{1}{2} dx_j\wedge d\hat{x}_j$ is the B-field which does not depend on the choice of coordinates.
\end{definition}

\begin{remark}
	If we start with the dual torus  $(T^\vee=V^\vee /\Lambda^\vee,\,-(\omega+B\omega^{-1}B)^{-1},\, (\omega+B\omega^{-1}B)^{-1}B\omega^{-1})$, then the twisted double torus of the dual torus is
		\bee
		(T^\vee\times T,\quad \frac{1}{2}
		\left(\begin{array}{cc}
		-\omega^{-1} & -\omega^{-1}B\\
		 B\omega^{-1} & \omega+B\omega^{-1}B
		\end{array}\right),
		\quad
		-\sum_j \frac{1}{2} dx_j\wedge d\hat{x}_j).	
		\eee
	This is different from the twisted double torus of $(T,\omega, B)$ by a B-field twist. See Chapter 6 for more discussion.
\end{remark}

\begin{remark}\label{AS2}
	The twisted doubling torus still comes with a complex structure $J$ which is twisted by the B-field. In matrix notation,
	
	\bee	
	 J=\left( \begin{array}{cc}
	 \omega^{-1}B & \omega^{-1} \\
	 -\omega-B\omega^{-1}B & -B\omega^{-1}
	\end{array}\right)
	=\left( \begin{array}{cc}
	 1 & 0 \\
	 -B & 1
	\end{array}\right)
	\left( \begin{array}{cc}
	 0 & \omega^{-1} \\
	 -\omega & 0
	\end{array}\right)
	\left( \begin{array}{cc}
	 1 & 0 \\
	 B & 1
	\end{array}\right).
	\eee
	
\end{remark}

The lift procedures are the same as in the case without B-field, the definition is copied:
\begin{definition}
	The lift of a coisotropic (possibly Lagrangian) brane $(C,\,\nabla)$ is defined to be
	\bee
\{(x,\,\hat{x})\in T\times T^{\vee}|x\in C \text{ and }\ \langle\hat{x},\,\gamma_x\rangle =(-1)^{\xi(\gamma_x)}hol_\nabla(\gamma_x),\   \forall\  \gamma_x \in \pi_1(C,\,x) \},\quad \pi_T^*\nabla
    \eee
	where $\gamma_x$ is any linear circle passing through $x$ within $C$. And $\xi:\,H_1(C)\rightarrow \mathbb{Z}/2$ such that
	\bee \xi(\gamma+\gamma')-\xi(\gamma)-\xi(\gamma')=c_1(\nabla)(\gamma\wedge \gamma') \ mod \ 2
	\eee
\end{definition}
Similarly to the case without B-field, the lifts are Lagrangian and complex submanifolds of $T\times T^\vee $:

\begin{proposition}\label{LiftisLag2}
	The lift of a coisotropic (possibly Lagrangian) brane is a Lagrangian brane in the doubling torus. And it is also a $J$-complex (see remark \ref{AS2}) submanifold of $T\times T^\vee$.
\end{proposition}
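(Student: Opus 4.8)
The plan is to reduce Proposition~\ref{LiftisLag2} to the already-established $B=0$ case (Proposition~\ref{LiftisLag1}) by means of the linear shear
$$\Phi = \begin{pmatrix} 1 & 0 \\ B & 1\end{pmatrix}$$
acting on the common tangent space $V \oplus V^\vee = T_{(x,\hat x)}(T \times T^\vee)$, where $B$ is read as a map $TT \to T^*T = TT^\vee$. This $\Phi$ is precisely the factor appearing in Remark~\ref{AS2}, and the whole argument rests on the fact that it simultaneously intertwines the three structures in play: it pulls the untwisted form $\Omega_0 = \frac12(\omega \oplus -\omega^{-1})$ back to the twisted form $\Omega = \frac12\begin{pmatrix}\omega + B\omega^{-1}B & B\omega^{-1} \\ -\omega^{-1}B & -\omega^{-1}\end{pmatrix}$; it conjugates the untwisted complex structure $J_0 = \begin{pmatrix}0 & \omega^{-1}\\ -\omega & 0\end{pmatrix}$ into $J$ (this is exactly $J = \Phi^{-1}J_0\Phi$ from Remark~\ref{AS2}); and it carries the tangent space of the twisted lift onto that of an untwisted lift. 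Since $\Phi$ is a linear isomorphism (of determinant $1$) and all three conditions are translation-invariant, it suffices to argue at the level of $V\oplus V^\vee$; I need not require $\Phi$ to descend to a torus automorphism.

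First I would verify the symplectic identity $\Phi^T \Omega_0 \Phi = \Omega$, using that $B,\omega,\omega^{-1}$ are antisymmetric, so that $\Phi^T = \begin{pmatrix}1 & -B \\ 0 & 1\end{pmatrix}$; this is a short block computation. Combined with Remark~\ref{AS2} it gives $\Omega(\cdot,\cdot) = \Omega_0(\Phi\,\cdot,\Phi\,\cdot)$ and $\Phi J = J_0\Phi$. Consequently a subspace $W \subset V\oplus V^\vee$ is $\Omega$-isotropic (resp. $J$-invariant) if and only if $\Phi(W)$ is $\Omega_0$-isotropic (resp. $J_0$-invariant). Thus both assertions about $\boldsymbol{C}$ follow once $\Phi(T\boldsymbol{C})$ is recognised as the tangent space of a lift in the \emph{untwisted} doubling torus.

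The heart of the matter is this last identification. Because the defining holonomy equation of the lift involves only $\nabla$ and $\xi$ — not $B$ or $\omega$ — the lift is literally the same subset of $T\times T^\vee$ as in the $B=0$ construction, so its tangent space is the same subspace, namely
$$T\boldsymbol{C} = \{\, u + f \mid u \in TC,\ \langle f, v\rangle = F(u,v)\ \forall v \in TC \,\}$$
with $f$ ranging over lifts to $T^*T = TT^\vee$ and $F$ the curvature of $\nabla$ (the description in the remark after Proposition~\ref{LiftisLag1}). Applying $\Phi$ replaces $f$ by $Bu + f$, whence
$$\Phi(T\boldsymbol{C}) = \{\, u + g \mid u \in TC,\ \langle g, v\rangle = (F+B)(u,v)\ \forall v \in TC\,\},$$
which is exactly the untwisted lift of the pair $(C,\tilde F)$ with $\tilde F := F + B|_C$. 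It then remains to check that $(C,\tilde F)$ is a coisotropic brane for $(T,\omega)$ with vanishing B-field: condition~(1), $\tilde F|_{TC_{iso}} = 0$, is precisely hypothesis~(1) for $(C,\nabla)$, and condition~(2), $\omega + \tilde F\omega^{-1}\tilde F = 0$ on $TC$, is precisely hypothesis~(2). With these in hand, Proposition~\ref{LiftisLag1} applies to $(C,\tilde F)$ and shows $\Phi(T\boldsymbol{C})$ is $\Omega_0$-Lagrangian and $J_0$-complex, hence $T\boldsymbol{C}$ is $\Omega$-Lagrangian and $J$-complex.

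I expect the main obstacle to be bookkeeping rather than conceptual: pinning down the correct variance of $\Phi$ for each structure (the transpose $\Phi^T(\cdot)\Phi$ for the two-form versus the conjugation $\Phi^{-1}(\cdot)\Phi$ for the complex structure) and tracking the antisymmetry signs of $B,\omega,\omega^{-1}$ so that the block products land on the stated matrices. I would also record that Assumption~\ref{assump} — invertibility of $\omega + B\omega^{-1}B = \omega(\mathrm{Id}+(\omega^{-1}B)^2)$ — is what makes $\Omega$ nondegenerate and $J$ a genuine complex structure, so that the target structures of the reduction exist; it is not otherwise used in the isotropy/invariance verification. Finally, since only the $2$-form $\tilde F$ enters the linear-algebraic content of Proposition~\ref{LiftisLag1}, I need not produce an honest line bundle with curvature $\tilde F$ on $C$: the pointwise computation is all that is invoked.
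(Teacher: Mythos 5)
Your proposal is correct and is essentially the paper's own argument: the paper's proof consists precisely of the factorization $\Omega=\Phi^{T}\Omega_0\Phi$ with $\Phi=\left(\begin{smallmatrix}1 & 0\\ B & 1\end{smallmatrix}\right)$ (the same shear appearing in $J=\Phi^{-1}J_0\Phi$ of Remark \ref{AS2}), followed by the instruction to rerun the computation of Proposition \ref{LiftisLag1} with $F$ replaced by $B+F$ --- which is exactly what your conjugation-by-$\Phi$ reduction makes precise. Your supporting observations (that the lift as a subset is independent of $B$, that the twisted brane conditions for $(C,\nabla)$ are the untwisted ones for $\tilde F=F+B|_C$, and that no honest line bundle with curvature $\tilde F$ is required since only pointwise linear algebra is invoked) are careful elaborations of the paper's terse proof rather than a different route.
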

\begin{proof}
Note that
$$
\left(\begin{array}{cc}
	 \omega+B\omega^{-1}B & B\omega^{-1}\\
		 -\omega^{-1}B & -\omega^{-1}
		\end{array}\right)
=\left(\begin{array}{cc}
	1 & -B \\
	0 & 1
	\end{array}\right)
\left(\begin{array}{cc}
	\omega & 0 \\
	0 & -\omega^{-1}
	\end{array}\right)
\left(\begin{array}{cc}
	1 & 0 \\
	B & 1
	\end{array}\right).
$$
The proof goes over the same as for Proposition \ref{LiftisLag1} if we replace $F$ by $B+F$.
\end{proof}

\begin{remark}
Recall that a space filling coisotropic brane $(C,\,\nabla)$ comes with a complex structure $\omega^{-1}(F+B|_C)$. This complex structure coincides the complex structure \ref{AS2} on the lift $\boldsymbol{C}$ under the isomorphism $\Pi_{T}:\,\boldsymbol{C} \rightarrow C$, where $\Pi_{T}:\,T\times T^\vee \rightarrow T$ is the projection to the $T$ factor.
\end{remark}

\subsection{Mirror of the twisted double torus}

We recall that the mirror of a symplectic torus with B-field $(T,\,\omega,\,B)$ equipped with a SYZ fibration $F\rightarrow T \rightarrow Q$ is as follows:
\begin{equation}
H^1(F;\,\mathbb{C})/H^1(F;\,\mathbb{Z})+(B+i\omega) H_1(Q;\,\mathbb{Z}).
\end{equation}

\begin{example}\label{Tndouble}
	Let $(T,\,B+i\omega)=((\mathbb{R}/\mathbb{Z})^{n}_r\times (\mathbb{R}/\mathbb{Z})^{n}_\theta ,\,\tau dr \wedge d\theta)$ with SYZ fibers $F=T_\theta \times \{r\}$. The dual torus is $(T^\vee,\,(B+i\omega)^{-1})=((\mathbb{R}/\mathbb{Z})^{n}_{\hat{\theta}} \times (\mathbb{R}/\mathbb{Z})_{\hat{r}}^{n} ,\,\tau^{-1} d\hat{\theta}\wedge d\hat{r})$ with dual SYZ fibers $F^\vee =T_{\hat{r}}^n\times\{\hat{\theta}\}$.
	The mirror complex torus for the dual torus is
	$$E_{-\tau^{-1}}=\mathbb{C}^n/(\mathbb{Z}^n-(\tau^{T})^{-1} (\mathbb{Z}^n)).$$
	The twisted double torus is
	\bee\ba
	\Big(T\times T^{\vee}=(\mathbb{R}/\mathbb{Z})^{2n}\times (\mathbb{R}/\mathbb{Z})^{2n},
\quad  \boldsymbol{B}+i\boldsymbol{\omega}&=
	\frac{i}{2}(Im\tau +Re\tau (Im\tau)^{-1}Re\tau)dr\wedge d\theta \\
	&-\frac{i}{2}\tau(Im\tau)^{-1}dr\wedge d\hat{r}\\
	&+\frac{i}{2}(Im\tau)^{-1}\tau d\hat{\theta}\wedge d\theta \\
	&-\frac{i}{2} (Im\tau)^{-1} d\hat{\theta}\wedge d\hat{r}\Big).
	\ea\eee
	with SYZ fibers
	$\boldsymbol{F}=\{r\}\times T_\theta^n\times\{\hat{\theta}\}\times T_{\hat{r}}^n$.
	The mirror is
	
	\bee \mathbb{E}=\mathbb{C}^{4n}/(\mathbb{Z}^{2n}+\boldsymbol{\tau}^{T}(\mathbb{Z}^{2n})), \quad\boldsymbol{\tau}^{T} =\frac{i}{2}
	\left(\begin{array}{cc}
	Im\tau^T+Re\tau^T (Im\tau^T)^{-1}Re\tau^T & \tau^{T}(Im\tau^T)^{-1} \\
	-(Im\tau^{T})^{-1}\tau^T & -(Im\tau^T)^{-1}
	\end{array}\right).
	\eee
	Equipping SYZ fibers
	$\boldsymbol{F}=\{r\}\times T_\theta^n\times\{\hat{\theta}\}\times T_{\hat{r}}^n$
	with connection $\nabla=d+2\pi i (\phi d\theta +\kappa d\hat{r})$, local coordinates on the mirror are
	\bee\ba
	&\frac{1}{2\pi i}log(z_\frac{\partial}{\partial \theta})
	=\frac{i}{2}(Im\tau^T+Re\tau^T (Im\tau^T)^{-1}Re\tau^T)r
	+\frac{i}{2}\tau^{T}(Im\tau^T)^{-1}\hat{\theta}-\phi \\
	&\frac{1}{2\pi i}log(z_\frac{\partial}{\partial \hat{r}})
	=-\frac{i}{2}(Im\tau^{T})^{-1}\tau^T r
	-\frac{i}{2}(Im\tau^T)^{-1}\hat{\theta} -\kappa.
	\ea\eee
	Using alternative coordinates
	\bee\ba
		&u=\frac{1}{2\pi i}log(z_\frac{\partial}{\partial \theta})
		+\tau^T \frac{1}{2\pi i}log(z_\frac{\partial}{\partial \hat{r}})
		=\tau^T r-\tau^T \kappa -\phi \\
		&v=\frac{1}{2\pi i}log(z_\frac{\partial}{\partial \theta})
		+\bar{\tau}^T \frac{1}{2\pi i}log(z_\frac{\partial}{\partial \hat{r}})
		=-\hat{\theta}-\phi-\bar{\tau}^T\kappa,
	\ea\eee
	we have: $\boldsymbol{E}\simeq E_\tau \times E_{-\bar{\tau}}.$
	
\end{example}

\begin{remark}
	The mirror of the twisted double torus constructed above turns out to be isomorphic to the product of the original mirror with its complex conjugate $E\times \bar{E}$. This twisted double torus has the property that, even if a sheaf $\mathcal{E}\in Coh(E)$ corresponds to a coisotropic brane in $T$, a closely related sheaf on $E\times \bar{E}$ corresponds to a Lagrangian (which is the lift of the coisotropic brane) in $\boldsymbol{T}$.
\end{remark}
SYZ fibers $F\subset T$ lift to fibers $\boldsymbol{F}\subset \boldsymbol{T}$ which correspond to points in $E\times 0$, i.e. $v=0$.
In example \ref{Tndouble}, $(F=\{r\}\times T^n_{\theta},\ \nabla =d+2\pi i\phi d\theta) $ is lifted to
$(\boldsymbol{F}=\{r\}\times T^n_{\theta}\times\{\hat{\theta}=-\phi\}\times T^n_{\hat{r}},\ \nabla=d+2\pi i\phi d\theta)$ which corresponds to the point $u=\tau^Tr-\phi$, $v=0$.\\
Similarly, if a Lagrangian brane $L$ is mirror to a coherent sheaf $\mathcal{E}$, then the lift $\boldsymbol{L}$ is mirror to $\mathcal{E}\boxtimes \mathcal{E}_0$ on $E\times \bar{E}$, where $\mathcal{E}_0$ is a particular sheaf on $\bar{E}$.

\section{Floer Theory}

The Floer theory of $(T,B+i\omega)$ and its twisted double torus
$$(T\times T^{\vee},\quad \frac{1}{2}\sigma_0+\frac{i}{2}
\left(\begin{array}{cc}
\omega+B\omega^{-1}B & B\omega^{-1}\\
-\omega^{-1}B & -\omega^{-1}
\end{array}\right))$$
are deeply related to each other. In fact, the Floer cohomology between two Lagrangian brane $HF^*(L,\; L')$ in $(T,\; B+i\omega)$ can be identified with a subspace of the Floer cohomology of the lifts $HF^*(\boldsymbol{L},\; \boldsymbol{L}')$.

\subsection{Floer theory in 2-tori and their twisted doubles}
Let $(T=\mathbb{R}^2/\mathbb{Z}^2,\  B+i\omega=(b+ia)dr\wedge d\theta)$ equipped with SYZ fibration projecting to $r$ coordinate. \\
The coordinate on the mirror manifold is
$$z_{\theta}=e^{2\pi i\int{B+i\omega} }hol_\nabla(S^1_{\theta})
=e^{2\pi i\tau r}e^{-2\pi i\phi}=e^{2\pi i(\tau r-\phi)}.$$
Thus the mirror is the elliptic curve $E=\mathbb{C}/\mathbb{Z}+\tau \mathbb{Z}$, where $\tau=b+ia$. \\

\subsubsection{Theta functions on Elliptic curves}
Theta functions are holomorphic sections of holomorphic line bundles on elliptic curves. They can be constructed with the help of a holomorphic connection by periodizing a holomorphic section on the universal cover of the elliptic curve. If we begin with two gauge equivalent holomorphic connections on a degree $1$ line bundle, we get a priori different holomorhpic sections of the line bundle. The dimension of the space of holomorphic sections, which equals $1$, forces the two sections to be the same up to a constant. We can establish some magic formula for theta functions using this approach.
\begin{example}	
	Consider the degree $1$ holomorphic line bundle on the elliptic curve $E_\tau=\mathbb{C}/(\mathbb{Z}+\tau\mathbb{Z})$ where $\tau =b+ia$ with holomorphic connection $d+\frac{2\pi i}{a}ydx$. The transition function of the line bundle is given by
	             $$ s(z+1)=s(z),\quad s(z+n\tau)=e^{-\pi in^2b}e^{-2\pi inx}s(z).$$
	By starting with the holomorphic section $e^{-\frac{\pi}{a}y^2}$ on the universal cover of the torus and by periodizing it, we get a section of the line bundle given by
 	        $$s=\sum_n e^{-\frac{\pi}{a}(y+na)^2}e^{2\pi inx}e^{\pi in^2b}=\sum_n e^{\pi in^2\tau}e^{2\pi in(x+iy)}e^{-\frac{\pi}{a}y^2}.$$
\end{example}

\begin{example}
	Consider a gauge equivalent connection $d+\frac{\pi i}{a}(ydx-xdy)$. The transition function of the line bundle is given by
	$$ s(z+m+n\tau)=(-1)^{mn}e^{\frac{\pi i}{a}my}e^{\frac{\pi ib}{a}ny}e^{-\pi inx}s(z).$$
	By periodizing the holomorphic section $e^{-\frac{\pi}{2a}(x^2+y^2)}$ we get a holomorphic section
	\[
	\begin{aligned}
	&\sum_{m,n} (-1)^{mn}e^{\frac{\pi i}{a}my}e^{\frac{\pi ib}{a}ny}e^{-\pi inx}e^{-\frac{\pi}{2a}((x-m-nb)^2+(y-na)^2)}\\
	=&\sum_{m,n}(-1)^{mn}e^{-\frac{\pi}{2a}(m+n\tau)(m+n\bar{\tau})}e^{-\frac{\pi}{a}(m+n\bar{\tau})(x+iy)}e^{-\frac{\pi}{2a}(x^2+y^2)}.
	\end{aligned}
	\]
\end{example}

\begin{proposition}
	The holomorphic sections from the above examples are equal to each other up to a factor.
	      \be\label{thetaformula1}
	\begin{aligned}
	&\sum_{m,n}(-1)^{mn}e^{-\frac{\pi}{2a}(m+n\tau)(m+n\bar{\tau})}e^{-\frac{\pi}{a}(m+n\bar{\tau})z}e^{-\frac{\pi}{2a}z^2}\\
	=&\sum_{m,n}e^{-\frac{\pi}{2a}(z+m)^2}e^{-\frac{\pi}{a}n\bar{\tau}(z+m)}e^{-\frac{\pi}{2a}n^2\tau\bar{\tau}} \\
	=&\sqrt{2a}\sum_{l}e^{-\pi i\bar{\tau}l^2}\sum_{k}e^{\pi i\tau k^2}e^{2\pi ikz}.
	\end{aligned}
	       \ee
	
\end{proposition}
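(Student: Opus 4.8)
The plan is to establish the two equalities in \eqref{thetaformula1} separately: the first by a purely algebraic completion of squares, the second by Poisson summation. Conceptually, the two preceding examples produce holomorphic sections of isomorphic degree-$1$ line bundles (the two connections being gauge equivalent), and since $h^0$ of a degree-$1$ bundle on $E_\tau$ is one-dimensional, the sections must agree up to a multiplicative constant; the content of \eqref{thetaformula1} is to pin down that constant, so an explicit computation is unavoidable.

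For the first equality I would expand the quadratic exponent in the left-hand sum. Writing $\tau-\bar\tau=2ia$ and completing the square, one checks $(m+n\tau)(m+n\bar\tau)+2(m+n\bar\tau)z+z^2=(z+m+n\bar\tau)^2+2ia\,n(m+n\bar\tau)$, so the left-hand summand becomes $e^{-\frac{\pi}{2a}(z+m+n\bar\tau)^2}e^{-\pi inm}e^{-\pi in^2\bar\tau}$. The sign $(-1)^{mn}=e^{\pi imn}$ then cancels the cross term $e^{-\pi inm}$ exactly, leaving $\sum_{m,n}e^{-\frac{\pi}{2a}(z+m+n\bar\tau)^2}e^{-\pi in^2\bar\tau}$. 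Applying the same completion of squares to the middle sum (now using $n^2\tau\bar\tau=n^2\bar\tau^2+2ia\,n^2\bar\tau$) shows it equals the identical expression, which proves the first equality with no analysis.

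For the second equality I would apply Poisson summation in the variable $m$ to this common expression. The Gaussian Fourier transform $\int_{\mathbb{R}}e^{-\frac{\pi}{2a}u^2}e^{-2\pi iku}\,du=\sqrt{2a}\,e^{-2\pi ak^2}$ gives $\sum_m e^{-\frac{\pi}{2a}(z+m+n\bar\tau)^2}=\sqrt{2a}\sum_k e^{-2\pi ak^2}e^{2\pi ik(z+n\bar\tau)}$, which is where the prefactor $\sqrt{2a}$ is born. After substituting, the $n$-dependence collects into $\sum_n e^{-\pi i\bar\tau n^2}e^{2\pi ikn\bar\tau}$, which is disentangled by the diagonal shift $l=n-k$ into $e^{\pi i\bar\tau k^2}\sum_l e^{-\pi i\bar\tau l^2}$. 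Finally the surviving $k$-exponent recombines as $-2\pi ak^2+\pi i\bar\tau k^2=\pi i\tau k^2$ (using $\bar\tau=b-ia$), producing exactly the factored form $\sqrt{2a}\big(\sum_l e^{-\pi i\bar\tau l^2}\big)\big(\sum_k e^{\pi i\tau k^2}e^{2\pi ikz}\big)$.

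The completions of squares and the reindexing are routine; the step demanding genuine care is the Poisson summation, both in justifying convergence and the interchange of sums (everything converges absolutely since $\mathrm{Im}\,\tau=a>0$, giving Gaussian decay $e^{-\pi an^2}$, $e^{-\pi al^2}$, $e^{-\pi ak^2}$ in the three lattice variables) and in tracking the Gaussian normalization that yields the exact constant $\sqrt{2a}$. I expect the main obstacle to be bookkeeping rather than conceptual: keeping the holomorphic Gaussian trivialization factor $e^{-\frac{\pi}{2a}z^2}$ consistent through the Poisson step, so that the $z$-dependence that survives is precisely the theta series $\sum_k e^{\pi i\tau k^2}e^{2\pi ikz}$ while the remaining $\sqrt{2a}\sum_l e^{-\pi i\bar\tau l^2}$ is genuinely constant in $z$.
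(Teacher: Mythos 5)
Your proof is correct and follows essentially the same route as the paper: the paper proves the generalization \eqref{thetaformula2} by expanding the $1$-periodic periodized Gaussian $f(u)$ in a Fourier series (which is precisely your Poisson summation in $m$, yielding the same $\sqrt{2a}$ Gaussian normalization) and then performing the identical diagonal reindexing $l=n-k$. Your preliminary completion-of-squares reduction of the two left-hand expressions to a common form is a minor reorganization of bookkeeping the paper carries out inside the Fourier coefficient integral, not a different method.
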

\begin{remark}
	The right hand side of equation \eqref{thetaformula1} is (up to a constant factor) the standard formula for the theta function for a degree $1$ line bundle, while the left hand side natrually arises in the Floer products on the twisted double torus. And this formula is the key to relate Floer theory of $(T,\; B+i\omega)$ and Floer theory of its twisted doubling torus. We will provide an aternative proof of a generalization of this formula \eqref{thetaformula2} using Fourier series.
\end{remark}

\subsubsection{Floer products on the 2-torus and on its twisted doubling torus}
\begin{example}

	Let $T=\mathbb{R}^{2}/\mathbb{Z}^{2},\; B+i\omega=\tau dr\wedge d\theta$ be a symplectic two torus with coordinates $r,\theta$. Let $L_0$ be a horizontal Lagrangian $\{\theta=0\}$ (mirror to the structure sheaf), $L_1$ be a slope $-1$ Lagrangian $\{\theta=-r\}$ (mirror to a degree 1 line bundle $\mathcal{L}^1$), $L_z$ be a vertical Lagrangian with position $r$ and a connection with holonomy $e^{-2\pi i\phi}$, $(L_z=\{r\}\times S^1_\theta,\;  \nabla=d+2\pi i\phi d\theta)$, where $z=\tau r-\phi$.
	The generator $s=(0,\;0)\in L_0\cap L_1$ of
	$HF(L_0,\; L_1)$ correspond to the $\vartheta$-function in $H^0(E,\; \mathcal{L}^1)$
		
		$$\vartheta =\sum_{n\in \mathbb{Z}} e^{\pi\tau i n^2}e^{2\pi in z}.$$

	The doubling torus is given by
	    \bee
	     (T\times T^{\vee},\; \frac{1}{2}(\sigma_0+i\Omega)=\frac{i}{2a}(\tau\bar{\tau}dr\wedge d\theta+\tau (d\hat{r}\wedge dr+d\hat{\theta}\wedge d\theta)+d\hat{r}\wedge d\hat{\theta}))
	     \eee
	     Complex coordinates on the mirror are given by
	     \bee\ba
	    z_{\frac{\partial}{\partial\theta}}&=e^{2\pi i \frac{i}{2a}(\tau\bar{\tau}r+\tau \hat{\theta})}e^{-2\pi i \phi};\\
	    z_{\frac{\partial}{\partial\hat{r}}}&=e^{2\pi i \frac{-i}{2a}(\tau r+\hat{\theta})}e^{-2\pi i \kappa};
	    \ea\eee
	    Note that
	    \bee\ba
	    z_{\frac{\partial}{\partial\theta}}(z_{\frac{\partial}{\partial\hat{r}}})^{\tau}&=e^{2\pi i(\tau r-\tau \kappa -\phi)};\\
	    z_{\frac{\partial}{\partial\theta}}(z_{\frac{\partial}{\partial\hat{r}}})^{\bar{\tau}}&=e^{2\pi i(-\hat{\theta}-\phi-\bar{\tau}\kappa)}.
	    \ea\eee
	
	Let
	$$u=\tau r-\tau \kappa -\phi,\quad v=-\hat{\theta}-\phi -\bar{\tau}\kappa$$
	 be the new coordinates of the mirror manifold. We can see that the mirror manifold is isomorphic to $E_{\tau}\times E_{-\bar{\tau}}$.
	
	The lifts $\boldsymbol{L}_0=\{\theta=0,\; \hat{r}=0\}$ and $\boldsymbol{L}_1=\{\theta=-r,\; \hat{r}=\hat{\theta}\}$ of $L_0$ and $L_1$ to the twisted doubling torus intersect in one point, which corresponds to a section of a line bundle on the mirror manifold. Considering the intersection with SYZ fibers $(\boldsymbol{F}=\{r\}\times S_\theta^1\times S_{\hat{r}}^1\times\{\hat{\theta}\},\; \nabla=d+2\pi i \phi d\theta+2\pi i \kappa d\hat{r})$, the Flor product $CF(\boldsymbol{L}_0,\;\boldsymbol{L}_1)\otimes CF(\boldsymbol{L}_1,\;\boldsymbol{F})\rightarrow CF(\boldsymbol{L}_0,\;\boldsymbol{F})$ is given by the following expression, summing the contribution of holomorphic triangles of edge length $(n+r)$ in $T$ and $(m+\hat{\theta})$ in $T^\vee$:
	\bee\ba
s&=\sum_{m,n}e^{2\pi i (\frac{i}{4a}\tau\bar{\tau}(n+r)^2+\frac{i}{4a}(m+\hat{\theta})^2+\frac{i}{2a}\tau(n+r)(m+\hat{\theta}))}e^{-2\pi i(n+r)\phi}e^{2\pi i(m+\hat{\theta})\kappa}\\
	&=\sum_{m,n}e^{-\frac{\pi}{2a}(n^2\tau\bar{\tau}+m^2+2\tau mn)}e^{-\frac{\pi}{a}(m+n\bar{\tau})u}e^{\frac{\pi}{a}(m+n\tau)v}e^{-\frac{\pi}{2a}(\tau\bar{\tau}r^2+\hat{\theta}^2+2\tau r\hat{\theta})}e^{-2\pi ir\phi}e^{2\pi i\hat{\theta}\kappa}
\ea\eee
	Claim:
	\begin{equation}\label{thetaformula2}
	\begin{aligned}
		&\sum_{m,n}e^{-\frac{\pi}{2a}(n^2\tau\bar{\tau}+m^2+2\tau mn)}e^{-\frac{\pi}{a}(m+n\bar{\tau})u}e^{\frac{\pi}{a}(m+n\tau)v}e^{-\frac{\pi}{2a}u^2}e^{\frac{\pi}{a}uv} e^{-\frac{\pi}{2a}v^2} \\
		=&\sqrt{2a}\sum_k e^{\pi i\tau k^2}e^{2\pi iku}\sum_l e^{-\pi i\bar{\tau}l^2}e^{2\pi ilv}
	\end{aligned}
	\end{equation}

	\begin{proof}
		
		Let $\displaystyle f(u)=\sum_m e^{-\frac{\pi}{2a}(u+m)^2}e^{-\frac{\pi}{a}(n\bar{\tau}-v)(u+m)}$. \\
		Then $f(u+1)=f(u)$, hence $f$ is equal to its Fourier series
		\bee
		\begin{aligned}
		f(u)&=\sum_k\int_{0}^{1}f(z)e^{-2\pi ikz}dz\ e^{2\pi iku} \\
		&=\sum_k \sum_m\int_{0}^{1}e^{-\frac{\pi}{2a}((z+m)^2+(2n\bar{\tau}-2v)(z+m))}e^{-2\pi ikz}dz\ e^{2\pi iku}  \\
		&=\sum_k e^{\frac{\pi}{2a}(n\bar{\tau}-v+2ika)^2}\sum_m\int_{0}^{1}e^{-\frac{\pi}{2a}((z+m)^2+(2n\bar{\tau}-2v+4ika)(z+m)+(n\bar{\tau}-v+2ika)^2)}dz\ e^{2\pi ik u} \\
		&= \sum_k e^{\frac{\pi}{2a}(n\bar{\tau}-v+2ika)^2}\int_{-\infty}^{\infty}e^{-\frac{\pi}{2a}((z+n\bar{\tau}-v+2ika)^2)}dz\ e^{2\pi ik u} \\
		&=\sum_k \sqrt{2a}\ e^{\frac{\pi}{2a}(n\bar{\tau}-v+2ika)^2} e^{2\pi ik u}. \\
		\end{aligned}
		\eee
		Then
		\bee
		\begin{aligned}
		LHS &=\sum_{m,n}e^{-\frac{\pi}{2a}(n^2\tau\bar{\tau}+m^2+2\tau mn)}e^{-\frac{\pi}{a}(m+n\bar{\tau})u}e^{\frac{\pi}{a}(m+n\tau)v}e^{-\frac{\pi}{2a}u^2}e^{\frac{\pi}{a}uv} \\
		&=\sum_n f(u)e^{-\frac{\pi}{2a}\tau\bar{\tau}n^2}e^{\frac{\pi}{a}n\tau v} \\
		&=\sum_n \sum_k \sqrt{2a}\ e^{\frac{\pi}{2a}(n\bar{\tau}-v+2ika)^2} e^{2\pi ik u} e^{-\frac{\pi}{2a}\tau\bar{\tau}n^2}e^{\frac{\pi}{a}n\tau v} \\
		&=\sum_k \sum_n \sqrt{2a}\ e^{-\pi i\bar{\tau}(n-k)^2}e^{\pi i\tau k^2}e^{2\pi i(n-k)v}e^{2\pi iku} \\
		&=\sqrt{2a} \sum_k e^{\pi i\tau k^2}e^{2\pi iku}\sum_l e^{-\pi i\bar{\tau}l^2}e^{2\pi ilv}
		\end{aligned}
		\eee
		
	\end{proof}
	
\end{example}

  This calculation shows that, up to a suitable rescaling (due to the discrepancy between the trivialization given by Floer complex and the usual holomorphic trivialization), the generator of $CF(\boldsymbol{L}_0,\,\boldsymbol{L}_1)$ corresponds to the section $\theta_\tau(u)\theta_{-\bar{\tau}}(v)$ of $\mathcal{L}\boxtimes\mathcal{L}$ on $E_\tau\times E_{-\bar{\tau}}$.
  
  In general, let $L_0$ be a horizontal Lagrangian $\{\theta=0\}$ (mirror to the structure sheaf), $L_d$ be a slope $-d$ Lagrangian $\{\theta=-dr\}$ (mirror to the degree d line bundle $\mathcal{L}^d$), $L_z$ be a vertical Lagrangian with position $r$ and a connection $\nabla=d+2\pi i\phi d\theta$, where $z=\tau r-\phi$.
The generators $s_k=(\frac{k}{d},\; 0)\in L_0\cap L_d$ of
$HF(L_0,\; L_d)$ correspond to the $\vartheta$-basis of $H^0(E,\; \mathcal{L}^d)$

$$\vartheta_{k/d} =\sum_{n\in \mathbb{Z}} e^{\pi\tau id (n+\frac{k}{d})^2}e^{2\pi id(n+\frac{k}{d}) z}.$$

On the doubling torus
$(T\times T^\vee,\;  \frac{i}{2a}(\tau\bar{\tau}dr\wedge d\theta+\tau (d\hat{r}\wedge dr+d\hat{\theta}\wedge d\theta)+d\hat{r}\wedge d\hat{\theta}))$, we get lifts of the above Lagrangian branes:
\bee\ba
\boldsymbol{L}_0&=\{\theta=0,\  \hat{r}=0\}\\
\boldsymbol{L}_d&=\{\theta=-dr,\  \hat{r}=d\hat{\theta}\}\\
\boldsymbol{L}_z&=(\{r\}\times S^1_\theta\times S^1_{\hat{r}}\times\{ \hat{\theta}=-\phi\},\
\nabla=d+2\pi i\phi d\theta).
\ea\eee

An argument similar to that given above for the case $d=1$ shows that, in $T\times T^\vee$, the generator $s_j\otimes \hat{s}_k \in CF(\boldsymbol{L}_0,\; \boldsymbol{L}_d)$ given by the point of $\boldsymbol{L}_0\cap\boldsymbol{L}_d$ with coordinate $r=j/d$ and $\hat{\theta}=k/d$ corresponds to
\bee
\sum_{l\in \mathbb{Z}/d}e^{2\pi ikl/d}\vartheta_{(j-l)/d}(u)\vartheta_{l/d}(v)\in H^0(E_\tau\times E_{-\bar{\tau}},\ \mathcal{L}^{d}\boxtimes \mathcal{L}^{d}).
\eee

\subsection{The general case for $T^{2n}$}

Let $T=(\mathbb{R}/\mathbb{Z})^{2n}$, $ B+i\omega=\tau dr\wedge d\theta=\sum \tau_{jk}dr_j\wedge d\theta_k \ (\tau \in M_{n\times n}(\mathbb{C}))$, with Lagrangian fibers $F=\{r\}\times T_\theta ^n$ and base $Q$, the mirror complex torus is
$$E\cong H^1(F;\,\mathbb{C})/(H^1(F;\,\mathbb{Z})+(B+i\omega)H_1(Q;\,\mathbb{Z}))\cong \mathbb{C}^n/(\mathbb{Z}^n+\tau^{T} (\mathbb{Z}^n)).$$
We consider the following three Lagrangian branes in $(T,\,B+i\omega)$:
\begin{itemize}
	\item $L_0=\{\theta=0\}$ with trivial connection. $L_0$ is mirror to the structure sheaf on $E$.
	
	\item $L_z=(\{r\}\times T_\theta ^n$, $\nabla=d+2\pi i(\phi d\theta)$. $L_z$ is mirror to the skyscraper sheaf at $z=\tau^{T}r-\phi$ on $E$.
	
	\item $L_D=(\{\theta=-Dr\}$, $\nabla=d-\pi i(r^{T}(Re\tau D-D^{T}Re\tau^{T} )dr)$
 where $D\in GL(n,\;\mathbb{Z})$ such that $Im\tau D=D^{T}Im\tau^{T}>0$ and $Re\tau D-D^{T}Re\tau^{T}\in M_{n\times n}(\mathbb{Z})$. The transition of the line bundle is 
 \be s(r+m)=(-1)^{\xi(m)}e^{\pi i m^{T}(Re\tau D-D^{T}Re\tau^{T})r}s(r), \ee
  where $\xi:H_1(T;\mathbb{Z})\rightarrow \mathbb{Z}/2$ such that 
 \be
 \xi(m_1+m_2)=\xi(m_1)+\xi(m_2)+m_1^{T}(Re\tau D-D^{T}Re\tau^{T})m_2 \in \mathbb{Z}/2.
 \ee
 Note that $F=Re\tau D dr\wedge dr=-B|_{L_D}$ satisfying the B-field condition. $L_D$ is mirror to a line bundle $\mathcal{L}_D$ on $E$ with first Chern class 
 $ D^{T}dr \wedge (d\phi-Re\tau^{T}dr)$, or equivalently $\frac{i}{2}(Im\tau)^{-1}D^{T}dz\wedge d\bar{z}$, where $z=\tau^{T}r-\phi$.

\end{itemize}

The intersections of $L_0$ and $L_D$ have coordinates $(D^{-1}k,\;0)$, where $k\in \mathbb{Z}^n$. Hence
$HF(L_0,\;L_D)=CF(L_0,\; L_D)=span_{\mathbb{C}}\{s_{(D,\;k)}\}_{k\in \mathbb{Z}^n}$. If $k\equiv k'\; mod\, D\mathbb{Z}^n$, then $s_{(D,k)}$ and $s_{(D,k')}$ correspond to the same intersection point, and the corresponding generators of $CF(L_0,L_D)$ coincide up to a multiplicative factor (which arise from the holonomy of the connection on $L_D$, see \eqref{eq4.8}).

Denote $e_D=(r,\;-Dr)$ the generator of $HF(L_D,\;L_z)$ and $e_0=(r,\;0)$ the generator of $HF(L_0,\;L_z)$. We will calculate the Floer product $\mu_2:HF(L_D,\;L_z)\otimes HF(L_0,\;L_D)\rightarrow HF(L_0,\;L_D)$.\\
Note that 
$$hol(L_D)=e^{-\pi i \langle m-D^{-1}k,\;(Re\tau D-D^{T}Re\tau^{T})r\rangle }e^{\pi i\langle (D^{-1}k),\;(Re\tau D-D^{T}Re\tau^{T})m\rangle}(-1)^{\xi(m)}$$ on the line segment from $s_{(D,\;k)}$ to $e_D$ along the vector $(m+r-D^{-1}k,\, -D(m+r-D^{-1}k))$, $m\in\mathbb{Z}^n$. (The first term is obtained by integrating the connection form, the rest come from the transition functions of the line bundle over $L_D$.)
\bee\ba
e_D\circ s_{(D,\,k)}
&=\sum_{ \triangle\in M(e_D,\, S_{(D,k)},\, e_0)} e^{2\pi i\int_{\triangle}B+i\omega}hol(\partial \triangle) e_0 \\
&=\quad\sum_{m\in \mathbb{Z}^n}\quad  e^{\pi i(B+i\omega)(m+r-D^{-1}k,\,D(m+r-D^{-1}k))}e^{-2\pi i\langle D(m+r-D^{-1}k),\,\phi \rangle}hol(L_D)e_0 \\
&=\quad\sum_{m\in \mathbb{Z}^n}\quad (-1)^{\xi(m)}e^{\pi i\langle D^{-1}k,\,(Re\tau D-D^{T}Re\tau^{T})m\rangle}e^{\pi i\langle \tau D(m-D^{-1}k),\,m-D^{-1}k\rangle} e^{2\pi i \langle Dm-k,\,\tau^{T}r-\phi\rangle}\\
&\qquad\qquad\qquad e^{\pi i\langle \tau Dr,\,r\rangle}e^{-2\pi i\langle Dr,\, \phi \rangle}e_0
\ea\eee

Up to a rescaling, whole expression coincide with the theta functions
\bee
\vartheta_{D, \,k}(z)=\sum_{m\in \mathbb{Z}^n} (-1)^{\xi(m)}e^{\pi i\langle D^{-1}k, \,(Re\tau D-D^{T}Re\tau^{T})m\rangle}e^{\pi i\langle \tau D(m-D^{-1}k), \,m-D^{-1}k\rangle} e^{2\pi i \langle Dm-k, \,z\rangle}
\eee
corresponding to the sections of the line bundle $\mathcal{L}_D$, which satisfy
\bee
\vartheta_{D,\,k}(z+\tau^{T}h)=(-1)^{\xi(h)}e^{-\pi i\langle \tau Dh,\,h\rangle}e^{-2\pi i \langle Dh,\,z\rangle} \vartheta_{D,\,k}(z),\; \vartheta_{D,\,k}(z+h)=\vartheta_{D,\,k}(z).
\eee
\be\label{eq4.8}
\vartheta_{D,\,k+Ds}(z)=(-1)^{\xi(s)}e^{\pi i\langle (Re\tau D-D^{T}Re\tau^{T})s,\,D^{-1}k\rangle} \vartheta_{D,\,k}(z)
\ee

The twisted double torus is given by
\bee\ba
\Big(T\times T^{\vee}=(\mathbb{R}/\mathbb{Z})^{2n}\times (\mathbb{R}/\mathbb{Z})^{2n},\quad  \frac{1}{2}\sigma_0+\frac{i}{2}\Omega=&
\frac{i}{2}(Im\tau +Re\tau (Im\tau)^{-1}Re\tau)dr\wedge d\theta \\
&-\frac{i}{2}\tau(Im\tau)^{-1}dr\wedge d\hat{r}\\
&+\frac{i}{2}(Im\tau)^{-1}\tau d\hat{\theta}\wedge d\theta \\
&-\frac{i}{2} (Im\tau)^{-1} d\hat{\theta}\wedge d\hat{r}\Big)
\ea\eee

Recall that the mirror of the twisted double torus is
\bee
\boldsymbol{E}\cong \mathbb{C}^n/(\mathbb{Z}^n+\tau^{T}\mathbb{Z}^n)
\times \mathbb{C}^n/(\mathbb{Z}^n-\bar{\tau}^{T}\mathbb{Z}^n)
\eee
with holomorphic coordinates
\bee
u=\tau^{T}(r-\kappa)-\phi, \qquad
v=-\bar{\tau}^T\kappa-\hat{\theta}-\phi.
\eee

The lifts of the three Lagrangians above are
\begin{enumerate}
	\item $\boldsymbol{L}_0=\{\theta=0,\;\hat{r}=0\}$ with trivial connection. $\boldsymbol{L}_0$ is mirror to the structure sheaf.
	
	\item $\boldsymbol{L}_z=\{\{r\}\times T_{\theta}^n\times T_{\hat{r}}\times \{\hat{\theta}\} \}$.
	
	\item $\boldsymbol{L}_D=\Big(\{\theta=-Dr,\;  \hat{r}-D^{T}\hat{\theta}=-(Re\tau D-D^{T}Re\tau^{T})r \},\quad \nabla=d-\pi ir^{T}(Re\tau D-D^{T}Re\tau^{T} )dr\Big)$ .
	
\end{enumerate}

\bee
\boldsymbol{L}_0\cap \boldsymbol{L}_D
=\{(r=D^{-1}k,\,\theta=0,\,\hat{r}=0,\ \hat{\theta}
=(D^{T})^{-1}(Re\tau D-D^{T} Re\tau^{T})D^{-1}k+(D^{T})^{-1}l)\}
\eee

Denote $p=D^{-1}k,\; q=(D^{T})^{-1}(Re\tau D-D^{T} Re\tau^{T})D^{-1}k+(D^{T})^{-1}l.$\\
Let $s_{k,\,l}=(p,\,0,\,0,\,q) \in CF(\boldsymbol{L}_0,\,\boldsymbol{L}_D),\;
e_0=(r,\,0,\,0,\,\hat{\theta}) \in CF(\boldsymbol{L}_0,\;\boldsymbol{L}_z)$ and $$e_{D}=(r,\;-D(r-p),\;D^{T}(\hat{\theta}-q)-(Re\tau D-D^{T} Re\tau^{T})(r-p),\;\hat{\theta}) \in CF(\boldsymbol{L}_D,\;\boldsymbol{L}_z). $$
The coefficient of $e_0$ in $\mu^2(e_D,\, s_{k,l})$ is then
\bee\ba
s_{k, \,l}=
\sum_{m, \,n\in \mathbb{Z}^n}
&e^{-\frac{\pi }{2}
\langle Im\tau^{-1} D^{T}(\hat{\theta}+n-q),\,\hat{\theta}+n-q\rangle
-\frac{\pi}{2}
\langle\bar{\tau}Im\tau^{-1}D^{T}\tau^{T}(r+m-p),\,r+m-p\rangle
-\pi\langle Im\tau^{-1}D^{T}\tau^{T}(r+m-p),\,\hat{\theta}+n-q\rangle} \\
&e^{-2\pi i\langle D(r+m-p),\,\phi\rangle +2\pi i\langle D^{T}(\hat{\theta}+n-q)-(Re\tau D-D^{T}Re\tau^{T})(r+m-p),\,\kappa\rangle}\\
& (-1)^{\xi(m)}e^{-\pi i\langle m-p,\,(Re\tau D-D^{T}Re\tau^{T})r\rangle}e^{\pi i\langle p,\,(Re\tau D-D^{T}Re\tau^{T})m\rangle} \\
=\sum_{m,\,n\in \mathbb{Z}^n}
& (-1)^{\xi(m)}e^{-\frac{\pi }{2}\langle Im\tau^{-1}D^{T}(n-q),\,n-q\rangle-\frac{\pi }{2}\langle\bar{\tau}Im\tau^{-1}D^{T}\tau^{T}(m-p),\,m-p\rangle-\pi \langle Im\tau^{-1}D^{T}\tau^{T}(m-p),\,n-q\rangle} \\
&e^{\pi i \langle p,\,(Re\tau D-D^{T}Re\tau^{T})m\rangle} e^{-\pi\langle Im\tau^{-1}D^{T}(n-q+\bar{\tau}^{T}(m-p)),\,u\rangle}e^{\pi\langle Im\tau^{-1}D^{T}(n-q+\tau^{T}(m-p)),\,v\rangle} \\
&e^{-\frac{\pi }{2}\langle Im\tau^{-1}D^{T}\hat{\theta},\,\hat{\theta}\rangle-\frac{\pi }{2}\langle\bar{\tau}Im\tau^{-1}D^{T}\tau^{T}r,r\rangle-\pi \langle Im\tau^{-1}D^{T}\tau^{T}r,\,\hat{\theta}\rangle}e^{-2\pi i\langle Dr,\phi \rangle+2\pi i\langle D^{T}\hat{\theta}-(Re\tau D-D^{T}Re\tau^{T})r,\,\kappa\rangle} \\
=\sum_{m,\,n\in \mathbb{Z}^n}
&e^{-\frac{\pi }{2}\langle Im\tau^{-1}D^{T}(u+n-q),\,u+n-q\rangle}
e^{-\pi \langle Im\tau ^{-1}D^{T}(\bar{\tau}^{T}(m-p)-v),\,u+n-q\rangle}e^{-2\pi i\langle D(m-p),\,n-q\rangle } \\
& (-1)^{\xi(m)}e^{\pi i\langle p,\, (Re\tau D-D^{T}Re\tau^{T})m\rangle}
e^{-\frac{\pi}{2}\langle \bar{\tau}Im\tau^{-1}D^{T}\tau^T(m-p),\,m-p\rangle}
e^{\pi \langle Im\tau^{-1}D^{T}\tau^{T}(m-p),\,v\rangle} \\
&e^{\frac{\pi}{2}\langle Im\tau^{-1}D^{T}u,\,u\rangle}
e^{-\pi \langle Im\tau^{-1}D^{T}u,\,v\rangle }
e^{-\frac{\pi }{2}\langle Im\tau^{-1}D^{T}\hat{\theta},\,\hat{\theta}\rangle-\frac{\pi }{2}\langle\bar{\tau}Im\tau^{-1}D^{T}\tau^{T}r,\,r\rangle-\pi \langle Im\tau^{-1}D^{T}\tau^{T}r,\,\hat{\theta}\rangle}\\
&e^{-2\pi i\langle Dr,\phi \rangle+2\pi i\langle D^{T}\hat{\theta}-(Re\tau D-D^{T}Re\tau^{T})r,\,\kappa\rangle}
\ea\eee

Using the same Fourier series manipulation as in the proof of \eqref{thetaformula2}, this can be rewritten as

\bee\ba
s_{k, \,l}=\sum_{m,\,s\in \mathbb{Z}^n}
&\sqrt{\frac{2}{det(Im\tau^{-1}D^{T})}}
e^{\frac{\pi}{2}\langle Im\tau^{-1}D^{T}(\bar{\tau}^{T}(m-p)-v)+2is,\,\bar{\tau}^{T}(m-p)-v+2iIm\tau^{T}D^{-1}s \rangle}e^{2\pi i \langle s,u-q\rangle}\\
&(-1)^{\xi(m)}e^{\pi i\langle p,\, (Re\tau D-D^{T}Re\tau^{T})m\rangle}
e^{-\frac{\pi}{2}\langle \bar{\tau}Im\tau^{-1}D^{T}\tau^T(m-p),\,m-p\rangle}
e^{\pi \langle Im\tau^{-1}D^{T}\tau^{T}(m-p),\,v\rangle} \\
&e^{2\pi i\langle D(m-p),\,q\rangle}e^{\frac{\pi}{2}\langle Im\tau^{-1}D^{T}u,\,u\rangle}
e^{-\pi \langle Im\tau^{-1}D^{T}u,\,v\rangle }\\
&e^{-\frac{\pi }{2}\langle Im\tau^{-1}D^{T}\hat{\theta},\,\hat{\theta}\rangle-\frac{\pi }{2}\langle\bar{\tau}Im\tau^{-1}D^{T}\tau^{T}r,\,r\rangle-\pi \langle Im\tau^{-1}D^{T}\tau^{T}r,\,\hat{\theta}\rangle}e^{-2\pi i\langle Dr,\,\phi \rangle+2\pi i\langle D^{T}\hat{\theta}-(Re\tau D-D^{T}Re\tau^{T})r,\,\kappa\rangle} \\
\ea\eee
After a change of variables $s\mapsto Ds-t$ and rearranging, this becomes
\bee\ba
s_{k, \,l}=\sum_{t\in \mathbb{Z}^n/D\mathbb{Z}^n}
&\sqrt{\frac{2}{det(Im\ \tau^{-1}D^{T})}}
e^{-2\pi i\langle l,\,D^{-1}(k-t)\rangle}
e^{\pi i\langle (Re\tau D-D^{T}Re\tau^{T})D^{-1}k,\,D^{-1}t\rangle}
\\
&\sum_{s\in \mathbb{Z}^n}
(-1)^{\xi(s)}e^{\pi i\langle (Re\tau D-D^{T}Re\tau^{T})s,\,D^{-1}t\rangle}
e^{\pi i \langle \tau D(s-D^{-1}t),\,s-D^{-1}t\rangle}e^{2\pi i \langle Ds-t,u\rangle} \\
&\sum_{m\in \mathbb{Z}^n}
(-1)^{\xi(m)}e^{-\pi i \langle (Re\tau D-D^{T}Re\tau^{T})m,\,p-D^{-1}t\rangle}
e^{-\pi i \langle \bar{\tau}D(m-p+D^{-1}t),m-p+D^{-1}t\rangle}
e^{2\pi i\langle D(m-p+D^{-1}t),\,v\rangle} \\
&e^{\frac{\pi}{2}\langle Im\tau^{-1}D^{T}(u-v),\,(u-v)\rangle}\\
&e^{-\frac{\pi }{2}\langle Im\tau^{-1}D^{T}\hat{\theta},\,\hat{\theta}\rangle-\frac{\pi }{2}\langle\bar{\tau}Im\tau^{-1}D^{T}\tau^{T}r,\,r\rangle-\pi \langle Im\tau^{-1}D^{T}\tau^{T}r,\,\hat{\theta}\rangle}e^{-2\pi i\langle Dr,\,\phi \rangle+2\pi i\langle D^{T}\hat{\theta}-(Re\tau D-D^{T}Re\tau^{T})r,\,\kappa\rangle} \\
=\sum_{t\in \mathbb{Z}^n/D\mathbb{Z}^n}
&\sqrt{\frac{2}{det(Im\ \tau^{-1}D^{T})}}
e^{-2\pi i\langle l,\,D^{-1}(k-t)\rangle}
e^{\pi i\langle (Re\tau D-D^{T}Re\tau^{T})D^{-1}k,\,D^{-1}t\rangle}
\vartheta_{D,t}(u)\bar{\vartheta}_{D,\,k-t}(v)\\
&e^{\frac{\pi}{2}\langle Im\tau^{-1}D^{T}(u-v),\,(u-v)\rangle}\\
&e^{-\frac{\pi }{2}\langle Im\tau^{-1}D^{T}\hat{\theta},\,\hat{\theta}\rangle-\frac{\pi }{2}\langle\bar{\tau}Im\tau^{-1}D^{T}\tau^{T}r,\,r\rangle-\pi \langle Im\tau^{-1}D^{T}\tau^{T}r,\,\hat{\theta}\rangle}e^{-2\pi i\langle Dr,\,\phi \rangle+2\pi i\langle D^{T}\hat{\theta}-(Re\tau D-D^{T}Re\tau^{T})r,\,\kappa\rangle} \\
\ea\eee
The factors on the last two lines correspond to the difference between the Floer basis and the usual holomorphic trivialization of $\mathcal{L}^D\boxtimes\mathcal{L}_0^D$ on the mirror, and can be dropped.
This leads to the formula:

\be\label{usub}
\sum_{l\in \mathbb{Z}^n/D^{T}\mathbb{Z}^n}s_{k,l}
=\sqrt{2det(Im\ \tau D)}\vartheta_{D,k}(u)\bar{\vartheta}_{D,0}(v) \in H^0(E_\tau\times E_{-\bar{\tau}}; \mathcal{L}^{D}\boxtimes \mathcal{L}^{D}_0)
\ee
By evaluating  $v$ at $0$, we recover (up to a scaling factor) $\vartheta_{D,k}(u)$ up to a constant factor which corresponds to the Floer product on the original torus $T$.

\section{The ``u-part" Floer cohomology $HF_u(\boldsymbol{L},\,\boldsymbol{L}')$}

Given two Lagrangian branes $L$ and $L'$ in $(T,\,\omega)$ with their lifts $\boldsymbol{L}$ and $\boldsymbol{L}'$, and assuming that $\mathcal{L},\,\mathcal{L'}$ are the mirror sheaves of $L$ and $L'$,  we expect 
$$HF^*(\boldsymbol{L},\,\boldsymbol{L}')\cong Ext_{E}^*(\mathcal{L},\,\mathcal{L}')\boxtimes Ext_{\bar{E}}^*(\mathcal{L}_0,\,\mathcal{L}'_0).$$
The goal of this chapter is to define a subspace $HF_u(\boldsymbol{L},\,\boldsymbol{L}')$ of $HF^*(\boldsymbol{L},\,\boldsymbol{L}')$ which is isomorphic to $Ext_{E}^*(\mathcal{L},\,\mathcal{L}')$, and thus to $HF(L,L')$.
\subsection{The ``u-part" of $HF^*(\boldsymbol{L},\,\boldsymbol{L})$}
Recall that the twisted doubling torus $T\times T^\vee$ is equipped with a natural complex structure from Remark \ref{AS2},
\begin{equation}
J=\left( \begin{array}{cc}
\omega^{-1}B & \omega^{-1} \\
-\omega-B\omega^{-1}B & -B\omega^{-1}
\end{array}\right)
\end{equation}
which induces a complex structure on the cotangent bundle $T^*(T\times T^\vee)$, still denoted by $J$,
\begin{equation}
J=\left( \begin{array}{cc}
B\omega^{-1} & \omega+B\omega^{-1}B \\
-\omega^{-1}  & -\omega^{-1}B
\end{array}\right).
\end{equation}

Given a Lagrangian brane $(L,\,\nabla)$ in $(T,\,\omega)$ with its lift $(\boldsymbol{L},\,\pi_T^*\nabla)$, we can compare the first order deformation of the objects.
A first order deformation of $(L,\,\nabla)$ is described by $(v;\,\alpha)$, where $v$ is a normal vector of $L$, and $\alpha$ is a real $1$-form. $(v;\,\alpha)$ maps to 
$[\iota_v (\omega-iB)+i\alpha]\in H^1(L;\,\mathbb{C})\cong HF(L,L)$. 
The corresponding first order deformation of $(\boldsymbol{L},\,\pi_T^*\nabla)$ is given by 
$(v,\,-\tilde{\alpha};\,\alpha,\,0)$,where $\tilde{\alpha}$ is the image of $\alpha$ under the identification $T^*T\cong TT^\vee$. 
This maps to 
\begin{equation}
\iota_{v-\tilde{\alpha}}(\frac{1}{2}
\left(\begin{array}{cc}
\omega+B\omega^{-1}B & B\omega^{-1}\\
-\omega^{-1}B & -\omega^{-1}
\end{array}\right)
-i\sigma_0)+i\alpha=\frac{1}{2}(1+iJ)(\iota_v (\omega-iB)+i\alpha)
\in H^1(\boldsymbol{L};\,\mathbb{C}).
\end{equation}

Note that the full first order deformation space of $(\boldsymbol{L},\,\pi_T^*\nabla)$ coincides with $H^1(\boldsymbol{L};\, \mathbb{C})\cong HF(\boldsymbol{L},\boldsymbol{L})$, and the first order deformations coming from lifts are exactly the $(0,1)$ part of $H^1(\boldsymbol{L},\,\mathbb{C})\cong HF(\boldsymbol{L},\,\boldsymbol{L})$ with respect to $J$ mentioned above. Then 
\be
HF^*(L,\,L)\cong H^*(L;\,\mathbb{C})=\bigwedge H^1(L;\,\mathbb{C})\cong \bigwedge H^{0,\,1}_{J}(\boldsymbol{L})=H^{0,\,*}_{J}(\boldsymbol{L})
\ee.
\begin{definition}
	$HF(\boldsymbol{L},\,\boldsymbol{L})_u:=\, H^{0,\,*}_{J}(\boldsymbol{L})\subset H^*(\boldsymbol{L};\,\mathbb{C})=HF(\boldsymbol{L},\,\boldsymbol{L})$.
\end{definition}

\subsection{The case of transversal intersection $L\cap L'$}
As in Chapter 4, consider a pair of Lagrangian branes $L,\,L'$, which are mirror to a pair of line bundles $\mathcal{L},\,\mathcal{L'}$. Let $\boldsymbol{L},\,\boldsymbol{L}'$ be the lifts in the double torus. Suppose $\mathcal{L'}\otimes \mathcal{L}^{-1}$ is ample, then the subspace of $HF^*(\boldsymbol{L},\,\boldsymbol{L'})$ spanned by 
$$\displaystyle \sum_{l\in \mathbb{Z}^n/(D'-D)^{T}\mathbb{Z}^n}s_{k,\,l}=\sqrt{2det(Im\ \tau (D'-D))}\vartheta_{D'-D,k}(u)\bar{\vartheta}_{D'-D,0}(v)$$
is isomorphic to $HF^*(L,L')$ by evaluation at $v=0$. 
We define the ``u-part" cohomology for transversal intersections as follows.

\begin{definition}
    Suppose $L=\{\theta=-Dr+c\} $ and $ L'=\{\theta=-D'r+c'\}$ intersect transversally, assume $\boldsymbol{L}\cap \boldsymbol{L}'=\{s_{k,l}\}$, 
    where $k\in \mathbb{Z}^n/(D'-D)\mathbb{Z}^n,\,
    l \in \mathbb{Z}^n/(D'-D)^{T}\mathbb{Z}^n$. Then we define

\be
    HF_u(\boldsymbol{L},\boldsymbol{L}'):=
    span\Big\{\sum_{l\in \mathbb{Z}^n/(D'-D)^{T}\mathbb{Z}^n}s_{k,l}\Big\}_{k\in \mathbb{Z}^n/(D'-D)\mathbb{Z}^n}
    \subset HF^*(\boldsymbol{L},\boldsymbol{L}').
    \ee
\end{definition}
\begin{definition}
    Let 
    \be
    \begin{aligned}
    \Pi_T:  HF^*(\boldsymbol{L},\boldsymbol{L}') &\rightarrow 
    HF_u^*(\boldsymbol{L},\boldsymbol{L}') \\
     s_j\otimes s_h' &\mapsto s_j\otimes (\frac{1}{det(D'-D)}\sum_l s_l').
    \end{aligned}
    \ee
    Define the product structure to be
    \be
    \begin{aligned}
    \mu^2_u: HF^*_u(\boldsymbol{L}',\,\boldsymbol{L}")&\otimes HF^*_u(\boldsymbol{L},\,\boldsymbol{L}') \rightarrow HF_u(\boldsymbol{L},\boldsymbol{L}") \\
    x&\otimes y \longmapsto \Pi_T(\mu^2(x,y)),
    \end{aligned}
    \ee
    where $\mu^2$ is the usual Floer product in $HF^*_u(\boldsymbol{L},\,\boldsymbol{L}')$.
\end{definition}

With these definitions, we state the main theorem:
\begin{theorem}
	For a pair of Lagrangian branes $L,\,L'$ which are mirror to a pair of line bundles $\mathcal{L},\,\mathcal{L'}$, let $\boldsymbol{L},\,\boldsymbol{L}'$ be the lifts in the twisted doubling torus. Suppose $\mathcal{L'}\otimes \mathcal{L}^{-1}$ is ample. Then the ``u-part" Floer cohomology $HF_u^*(\boldsymbol{L},\boldsymbol{L}')$ is isomorphic to $HF^*(L,\,L')$. And for two such pairs $L,L'$ and $L',L''$, the following diagram commutes
	\be\label{uspscomm}
	\begin{tikzcd}
	{HF^*(L',\,L'')\otimes HF^*(L,\,L')} \arrow[rr] \arrow[d, "\cong"] &  & {HF^*(L,\,L")} \arrow[d, "\cong"] \\
	HF^*_u(\boldsymbol{L}',\,\boldsymbol{L}'')\otimes HF^*_u(\boldsymbol{L},\,\boldsymbol{L}') \arrow[rr]                                 &  & HF_u(\boldsymbol{L},\boldsymbol{L}'').
	\end{tikzcd}
	\ee
\end{theorem}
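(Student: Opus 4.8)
The plan is to deduce both claims from the explicit Floer-product computation of Section~4 together with the product structure of the mirror $E_\tau\times E_{-\bar\tau}$ of the doubling torus. Write $D,D',D''$ for the slope matrices of $L,L',L''$, so that $\mathcal{L}'\otimes\mathcal{L}^{-1}$ has slope $D'-D$; the ampleness hypothesis means $D'-D>0$, which places us precisely in the situation of formula \eqref{usub}.

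\emph{The isomorphism.} First I would record that, under homological mirror symmetry for the doubling torus and the K\"unneth decomposition on $E_\tau\times E_{-\bar\tau}$, the generators $s_{k,l}$ of $CF(\boldsymbol{L},\boldsymbol{L}')$ correspond to sections of $\mathcal{L}^{D'-D}\boxtimes\mathcal{L}_0^{D'-D}$. By \eqref{usub} applied to the pair $(D,D')$, the $u$-part generators $\sum_l s_{k,l}$ are, up to the scalar $\sqrt{2\,det(Im\,\tau(D'-D))}$, the factorized sections $\vartheta_{D'-D,k}(u)\,\bar\vartheta_{D'-D,0}(v)$. Evaluation at $v=0$ multiplies each by the nonzero constant $\bar\vartheta_{D'-D,0}(0)$ and returns $\vartheta_{D'-D,k}(u)$; as $k$ ranges over $\mathbb{Z}^n/(D'-D)\mathbb{Z}^n$ these are exactly the theta basis of $H^0(E,\mathcal{L}'\otimes\mathcal{L}^{-1})\cong HF^*(L,L')$. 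Ampleness forces this cohomology into a single degree, so evaluation at $v=0$ is a bijection onto a basis and yields the isomorphism $HF_u^*(\boldsymbol{L},\boldsymbol{L}')\cong HF^*(L,L')$.

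\emph{Compatibility with the products.} For the commutative square I would use that HMS for the doubling torus identifies its Floer product $\mu^2$ with the Yoneda product on $E_\tau\times E_{-\bar\tau}$, which by K\"unneth factors as $\mu^2_E\otimes\mu^2_{\bar E}$. On the $u$-factor, $\mu^2_E$ is the multiplication of theta functions on $E$, and by the classical HMS theorem quoted in Section~2 this reproduces the Floer product on the original torus $T$, i.e.\ the top arrow of \eqref{uspscomm}. The $v$-factor multiplies the two ground states $\bar\vartheta_{D'-D,0}(v)$ and $\bar\vartheta_{D''-D',0}(v)$, giving a combination of the $\bar\vartheta_{D''-D,h}(v)$; applying $\Pi_T$ extracts the component $h=0$ together with the normalizing factor $1/det(D''-D)$. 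Commutativity therefore reduces to the single statement that, after this projection, the surviving $v$-contribution is a universal scalar independent of the $u$-side indices, which is then absorbed into the $\sqrt{2\,det(\cdots)}$ normalizations of \eqref{usub}.

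\emph{The main obstacle.} The hard part will be precisely this last reduction on the conjugate factor $\bar E$. I would have to show that projecting the product $\mu^2_{\bar E}(\bar\vartheta_{D'-D,0},\bar\vartheta_{D''-D',0})$ onto $\bar\vartheta_{D''-D,0}(v)$ produces a single scalar, independent of which $u$-side generators are being multiplied, and that this scalar matches the ratio of the three $\sqrt{2\,det(\cdots)}$ constants of \eqref{usub}. Concretely this is a theta addition-formula computation on $E_{-\bar\tau}$ together with careful bookkeeping of the constants entering $\mu^2_{\bar E}$ and $\Pi_T$. Once the ground-state coefficient is seen to factor out cleanly, the square commutes by comparing it term by term with the $T$-side theta multiplication already carried out in Section~4.
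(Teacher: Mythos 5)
Your proposal is correct and takes essentially the same approach as the paper: the paper's (two-line) proof defines the isomorphism by $s_k \mapsto \sum_{l} s_{k,l}\big/\bigl(\sqrt{2\det(\mathrm{Im}\,\tau(D'-D))}\,\bar{\vartheta}_{D'-D,0}(0)\bigr)$, which is precisely your evaluation-at-$v=0$ normalization extracted from \eqref{usub}, and it likewise reduces commutativity of the square to ``\eqref{usub} and formulas for $\vartheta$-functions,'' i.e.\ the theta addition-formula and constant bookkeeping on the $\bar{E}$-factor that you correctly single out as the remaining computation. Your write-up is in fact more explicit than the paper's proof (which leaves the $v$-side scalar factorization entirely implicit), so no gap relative to the paper's own standard of detail.
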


\begin{proof}
The isomorphism $HF_u^*(\boldsymbol{L},\boldsymbol{L}')\cong HF^*(L,\,L')$ is given by
\be
s_k \mapsto 
\sum_{l\in \mathbb{Z}^n/(D'-D)^{T}\mathbb{Z}^n}s_{k,l}/(\sqrt{2det(Im\ \tau (D'-D))}\bar{\vartheta}_{D'-D,0}(0))
\ee
Then the commutative diagram follows from \eqref{usub} and formulas for $\vartheta$-functions.
\end{proof}

\subsection{Proposal for general case}

In the general case when $L$ and $L'$ intersect non transversally, we expect there is still a ``u-part" subspace $HF_u(\boldsymbol{L},\boldsymbol{L}')$ isomorphic to $HF(L,L)$, and a similar commutative diagram holds.
In a simple case, assume $T,\,L,\,L'$ admit a simultaneous  decomposition $T=T_1\times T_2,\, L=L_1\times L_2,\, L'=L_1'\times L_2'$, such that $L_1,\,L_2$ (resp. $L_1',\,L_2'$) are Lagrangian submanifolds of $T_1$ (resp. $T_2$), and assume $L_1=L_1'$, while $L_2$ intersects $L_2'$ transversally.
Then the twisted doubling torus and lifts of $L,\,L'$ also admit product structures, and 
\be
\begin{aligned}
HF^*(L,L')& \cong  HF^*(L_1,L_1')\otimes HF(L_2,L_2') \\
&\cong HF_u^*(\boldsymbol{L}_1,\boldsymbol{L}_1)\otimes HF_u^*(\boldsymbol{L}_2,\boldsymbol{L}_2') \\
&\cong H^{0,*}(\boldsymbol{L}_1)\otimes HF_u(\boldsymbol{L}_2,\boldsymbol{L}_2')
\end{aligned}
\ee
Inspired by this, and combining the definitions of ``u-part" Floer cohomology in the previous sections, we make the following tentative definition.

\begin{definition}
    For a pair of Lagrangian branes $L$ and $L'$, and their lift $\boldsymbol{L}$ and $\boldsymbol{L}'$, let
\bee 
HF^*_u(\boldsymbol{L},\,\boldsymbol{L}') :=
G- \textbf{invariant part of } H^{0,*}(\boldsymbol{L}\cap \boldsymbol{L}')
\eee 
Where $G$ is the discrete group of translations in the direction of $T^\vee$ acting on $\boldsymbol{L}\cap \boldsymbol{L}'$.
\end{definition}

\begin{conjecture}
For a pair of Lagrangian branes $L,\,L'$  with their lifts $\boldsymbol{L},\,\boldsymbol{L}'$ in the twisted doubling torus, the "u-part" Floer cohomology $HF_u^*(\boldsymbol{L},\boldsymbol{L}')$ is isomorphic to $HF^*(L,\,L')$. And for two such pair $L,L'$ and $L',L"$, the following diagram commutes:
	\be\label{uspscomm}
	\begin{tikzcd}
	{HF^*(L',\,L")\otimes HF^*(L,\,L')} \arrow[rr] \arrow[d, "\cong"] &  & {HF^*(L,\,L")} \arrow[d, "\cong"] \\
	HF^*_u(\boldsymbol{L}',\,\boldsymbol{L}")\otimes HF^*_u(\boldsymbol{L},\,\boldsymbol{L}') \arrow[rr]                                 &  & HF_u(\boldsymbol{L},\boldsymbol{L}").
	\end{tikzcd}
	\ee
\end{conjecture}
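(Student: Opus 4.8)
The plan is to reduce the statement to the explicit theta-function computation of Section 4 together with the key identity \eqref{usub}, exploiting the splitting of the mirror $\boldsymbol{E}\cong E_\tau\times E_{-\bar\tau}$ of the twisted double torus into a ``$u$-factor'' and a ``$v$-factor''. First I would place $L,L'$ into the standard linear model $L=\{\theta=-Dr+c\}$, $L'=\{\theta=-D'r+c'\}$ used there, and record that ampleness of $\mathcal{L}'\otimes\mathcal{L}^{-1}$ means $\mathrm{Im}\,\tau(D'-D)>0$. This positivity guarantees that $L$ and $L'$ meet transversally, that $HF^*(L,L')$ is concentrated in degree $0$, and that its generators $s_k$, indexed by $k\in\mathbb{Z}^n/(D'-D)\mathbb{Z}^n$, correspond under the Fukaya--Polishchuk--Zaslow dictionary of Section 2 to the theta basis $\vartheta_{D'-D,k}$ of $H^0(E,\mathcal{L}'\otimes\mathcal{L}^{-1})$. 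For the triple product I would in addition assume $\mathcal{L}''\otimes\mathcal{L}'^{-1}$ ample, so that all three differences $D'-D$, $D''-D'$, $D''-D$ are positive and every Floer group appearing in \eqref{uspscomm} is a space of honest holomorphic sections in degree $0$.

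With these reductions the isomorphism $HF_u^*(\boldsymbol{L},\boldsymbol{L}')\cong HF^*(L,L')$ is read off directly from \eqref{usub}. By definition the $u$-part is spanned by the symmetrized generators $\sum_l s_{k,l}$, and \eqref{usub} identifies each with $\sqrt{2\det(\mathrm{Im}\,\tau(D'-D))}\,\vartheta_{D'-D,k}(u)\,\bar\vartheta_{D'-D,0}(v)$. Since the $v$-factor $\bar\vartheta_{D'-D,0}(v)$ is common to all of them, dividing by the fixed nonzero scalar $\sqrt{2\det(\mathrm{Im}\,\tau(D'-D))}\,\bar\vartheta_{D'-D,0}(0)$ and evaluating at $v=0$ sends the $u$-part basis bijectively onto the theta basis $\vartheta_{D'-D,k}(u)$, hence onto the Floer basis of $HF^*(L,L')$. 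This establishes the explicit isomorphism $s_k\mapsto\sum_l s_{k,l}$ up to that normalization and proves the first assertion.

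To prove commutativity of \eqref{uspscomm} I would evaluate both composites on basis elements and compare structure constants. Because the mirror splits and the $u$-part generators factor, the doubling-torus Floer product of two $u$-part elements factors as a product in the variable $u$ times a product in the variable $v$: the $u$-factor $\vartheta_{D''-D',j}(u)\,\vartheta_{D'-D,k}(u)=\sum_p c_p\,\vartheta_{D''-D,p}(u)$ is precisely the theta multiplication that computes $\mu^2$ on $HF^*(L,L')$ via the Fukaya--Polishchuk--Zaslow theorem, so the $u$-structure constants $c_p$ already match the top edge of the diagram; the $v$-factor $\bar\vartheta_{D''-D',0}(v)\,\bar\vartheta_{D'-D,0}(v)=\sum_m d_m\,\bar\vartheta_{D''-D,m}(v)$ involves only the index $0$ and is therefore independent of $j,k$. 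Applying $\Pi_T$ retains exactly the $m=0$ term $d_0\,\bar\vartheta_{D''-D,0}(v)$, so $\mu^2_u$ is the transport of $\mu^2$ scaled by the single constant $d_0$, the same for every pair of basis vectors.

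The main obstacle is the final bookkeeping of these scalars: commutativity on the nose requires that the constant $d_0$ extracted from the product of ground-state conjugate theta functions exactly cancel the three normalization factors introduced in the isomorphisms, i.e.\ that $d_0=\bar\vartheta_{D''-D',0}(0)\,\bar\vartheta_{D'-D,0}(0)/\bar\vartheta_{D''-D,0}(0)$. This is a genuine identity among $v$-theta functions rather than a formal consequence of the factorization, and I would verify it by the same Fourier/Gaussian-summation argument used to prove \eqref{thetaformula2} and \eqref{usub}, now applied to the conjugate theta functions in the $v$-variable; this is where the distinguished role of the index $0$, and hence of the translation group $G$ defining $\Pi_T$, is essential. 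Once this uniform scalar is matched, the two composites agree coefficient by coefficient and the diagram commutes, completing the proof.
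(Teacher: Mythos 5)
There is a genuine gap here, and it is one of scope rather than of computation: the statement you were asked to prove is the \emph{Conjecture} of Section 5.3, i.e.\ the general case, whereas your very first reduction step --- placing $L,L'$ in the linear model $\{\theta=-Dr+c\}$, $\{\theta=-D'r+c'\}$ and assuming $\mathcal{L}'\otimes\mathcal{L}^{-1}$ (and then $\mathcal{L}''\otimes\mathcal{L}'^{-1}$) ample, so that all intersections are transverse and all Floer groups are concentrated in degree $0$ --- assumes away exactly the content that makes it a conjecture. Under those hypotheses the statement is precisely the Theorem of Section 5.2, which the paper has already proved, and indeed your argument from that point on essentially reproduces the paper's own proof of that theorem: the isomorphism $s_k\mapsto\sum_l s_{k,l}$ normalized by $\sqrt{2\det(\mathrm{Im}\,\tau(D'-D))}\,\bar\vartheta_{D'-D,0}(0)$, with commutativity of \eqref{uspscomm} extracted from \eqref{usub} and theta-function identities. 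Your elaboration of the scalar bookkeeping --- that $\Pi_T$ retains the $m=0$ component of the $v$-side product and that $d_0=\bar\vartheta_{D''-D',0}(0)\,\bar\vartheta_{D'-D,0}(0)/\bar\vartheta_{D''-D,0}(0)$ must hold, verifiable by the Gaussian/Fourier summation of \eqref{thetaformula2} --- is a useful expansion of the paper's terse ``follows from \eqref{usub} and formulas for $\vartheta$-functions,'' but it adds detail to a known case rather than progress on the open one.

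What the conjecture actually demands, and what your proposal never touches, is the regime where the lifts intersect non-transversally and $HF_u^*(\boldsymbol{L},\boldsymbol{L}')$ is defined as the $G$-invariant part of $H^{0,*}(\boldsymbol{L}\cap\boldsymbol{L}')$ with respect to the transverse complex structure $J$: this includes the self-morphism case $L=L'$ (where the identification is $HF^*(L,L)\cong H^{0,*}_J(\boldsymbol{L})$, proved only at the level of first-order deformations in Section 5.1), non-ample differences where $HF^*(L,L')$ sits in higher degree and is mirror to higher $\mathrm{Ext}$ groups rather than spaces of holomorphic sections, clean positive-dimensional intersections beyond the product situation sketched in Section 5.3, products mixing transverse and non-transverse pairs (e.g.\ the module structure of $HF_u(\boldsymbol{L},\boldsymbol{L}')$ over $H^{0,*}_J(\boldsymbol{L})$), and --- arguably the point of the whole paper --- pairs involving coisotropic branes, whose lifts are Lagrangian but whose mirrors are not line bundles on $E$ at all. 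No amount of theta-function manipulation in the transverse linear model addresses these cases; a proof would need, at minimum, a Floer-theoretic (e.g.\ Morse--Bott or family Floer) model for $HF^*(\boldsymbol{L},\boldsymbol{L}')$ at clean intersections compatible with the $G$-action and with the $(0,*)$-decomposition, which is a genuinely new ingredient. As written, your proposal proves the already-established theorem and leaves the conjecture untouched.
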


\section{Equivalence of $T$ and $T^\vee$ with B-field Twist}

Recall that the dual torus of $(T=V/\Lambda,B+i\omega)$ is $(T^\vee=V^\vee /\Lambda^\vee,\ (B+i\omega)^{-1} )$ under assumption \ref{assump}. Explicitly, $$(B+i\omega)^{-1}=(\omega+B\omega^{-1}B)^{-1}B\omega^{-1}-i(\omega+B\omega^{-1}B)^{-1}.$$

\begin{example}
Let $T=\mathbb{R}^2/\mathbb{Z}^2$ with $B+i\omega=\tau dr\wedge d\theta$, $\tau=b+ia,\ a>0$, then its dual torus is $T^\vee=\mathbb{R}^2/\mathbb{Z}^2$ with $(B+i\omega)^{-1}=-\tau^{-1}d\hat{r}\wedge d\hat{\theta}$. They are non symplectomorphic tori for generic $\tau$. However, their mirror manifolds are isomorphic as complex manifold.
\bee 
\begin{tikzcd}
\mathbb{C}/\mathbb{Z}+\tau \mathbb{Z} &  & \mathbb{C}/\mathbb{Z}-\tau^{-1}\mathbb{Z}. \arrow[ll, "\times \tau"']
\end{tikzcd}
\eee
This implies that their Fukaya categories are equivalent.
\end{example}
The phenomenon that dual tori have equivalent Fukaya categories is not obvious without referring to Homological Mirror Symmetry. 
However, the twisted doubling tori of $(T,B+i\omega)$ and $(T^\vee,\ (B+i\omega)^{-1} )$ are the same up to a B-field twist.
Explicitly, their twisted doubling tori are
\begin{equation*}
	(T\times T^\vee,\quad  \Omega=\frac{1}{2}
	\left(\begin{array}{cc}
	 \omega+B\omega^{-1}B & B\omega^{-1}\\
		 -\omega^{-1}B & -\omega^{-1}
		\end{array}\right),
		\quad
		 \sigma_0=\sum_j \frac{1}{2} dx_j\wedge d\hat{x}_j)	
	\end{equation*}
	and
	\begin{equation*}
	(T\times T^\vee,\quad  \Omega=\frac{1}{2}
	\left(\begin{array}{cc}
	 \omega+B\omega^{-1}B & B\omega^{-1}\\
		 -\omega^{-1}B & -\omega^{-1}
		\end{array}\right),
		\quad
		 -\sigma_0=-\sum_j \frac{1}{2} dx_j\wedge d\hat{x}_j)	
	\end{equation*}
	The difference of B-field is an integral class in $H^2(T\times T^\vee; \mathbb{R})$. Let
	\be
	\nabla_0=d-2\pi i (rd\hat{r}+\theta d\hat{\theta})
	\ee
	be a $U(1)$ connection on $T\times T^\vee$ with curvature $2\sigma_0$. Then we have an equivalence of Fukaya categories of the two doubling tori by a B-twist:
	\be
	\ba 
	Fuk(T\times T^\vee,\Omega,\sigma_0) &\rightarrow Fuk(T\times T^\vee,\Omega,-\sigma_0) \\
	(L,\nabla) &\mapsto (L,\nabla\otimes \nabla_0|_L)
	\ea 
	\ee
\begin{conjecture}
	The Fukaya category of a torus $(T,B+i\omega)$ is equivalent to the Fukaya category of its dual $(T^\vee,(B+i\omega)^{-1})$.
\end{conjecture}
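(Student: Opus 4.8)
The plan is to deduce the conjecture from the two structural facts already assembled in this section. First, the doubling construction of Sections 3--5 realizes $Fuk(T,B+i\omega)$ as the non-full subcategory of $Fuk(T\times T^\vee,\Omega,\sigma_0)$ whose objects are the lifts $\boldsymbol{L}$ and whose morphism spaces are the ``u-part'' $HF_u^*(\boldsymbol{L},\boldsymbol{L}')$. Applying the same construction to the dual torus $(T^\vee,(B+i\omega)^{-1})$, and using the identification $T^\vee\times T\cong T\times T^\vee$ of the two factors (under which the two matrices $\Omega$ coincide, as recorded in the Remark on the doubling of the dual), realizes $Fuk(T^\vee,(B+i\omega)^{-1})$ as the corresponding u-part subcategory of $Fuk(T\times T^\vee,\Omega,-\sigma_0)$. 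Since the two ambient doubling tori differ only by the sign of the B-field, and $\nabla_0$ has curvature $2\sigma_0$, the twist $\mathcal{T}\colon(L,\nabla)\mapsto(L,\nabla\otimes\nabla_0|_L)$ is an equivalence $Fuk(T\times T^\vee,\Omega,\sigma_0)\to Fuk(T\times T^\vee,\Omega,-\sigma_0)$, converting the curvature constraint $F=-B|_L$ for $\sigma_0$ into the one for $-\sigma_0$.

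It then remains to show that $\mathcal{T}$ carries the subcategory coming from $T$ onto the one coming from $T^\vee$. On objects, I would check that the twist of a lift $\boldsymbol{L}=(L\times L^\perp,\pi_T^*\nabla)$ is isomorphic, in the target category, to the lift of the corresponding $T^\vee$-brane; the holonomy of $\nabla_0$ is precisely what turns the defining holonomy condition of the $T$-lift into that of the $T^\vee$-lift. On morphisms, $\mathcal{T}$ acts as the identity on the underlying $HF^*(\boldsymbol{L},\boldsymbol{L}')$, so the real content is that it intertwines the two u-part projections $\Pi_T$ and the products $\mu^2_u$ built from them.

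The hard part is exactly this compatibility of u-parts. The u-part is cut out by the twisted complex structure $J$ (its $(0,1)$-eigenspace), equivalently by evaluation at $v=0$ on the mirror $E\times\bar E$; but passing from $\sigma_0$ to $-\sigma_0$ conjugates $J$, and the swap $T\leftrightarrow T^\vee$ does not obviously preserve the distinguished ``$u$'' direction. One must verify that the twisted structure on $(T\times T^\vee,\Omega,-\sigma_0)$ selects, from the $T^\vee$-lifts, the same subspace that $\mathcal{T}$ produces from the u-part of the $T$-lifts, and that the two products $\mu^2_u$ agree after applying $\mathcal{T}$. Establishing this at the full $A_\infty$ level --- rather than on cohomology for transversal, ample pairs, as in the main theorem --- is what keeps the statement a conjecture; a complete proof would first require upgrading the u-part construction to an honest $A_\infty$-subcategory and proving that $\mathcal{T}$ is functorial with respect to it.
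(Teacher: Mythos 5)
Your proposal follows exactly the strategy the paper itself sketches: the two twisted doubling tori differ only by the sign of $\sigma_0$, the twist by $\nabla_0$ (curvature $2\sigma_0$) gives the equivalence $Fuk(T\times T^\vee,\Omega,\sigma_0)\to Fuk(T\times T^\vee,\Omega,-\sigma_0)$, and the conjecture would follow once the u-part subcategories are matched up. Note that the paper does not actually prove this statement --- it is left as a conjecture precisely because of the obstruction you correctly isolate, namely that the equivalence of doubling tori does not directly induce the isomorphism of morphism spaces but must be combined with the u-part projection $\Pi_T$ (as the paper's closing remark states), so your identification of the missing $A_\infty$-level compatibility is an accurate diagnosis rather than a gap in your argument relative to the paper's.
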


\begin{remark}
	 The isomorphism between morphism spaces in $(T,B+i\omega)$ and $(T^\vee,(B+i\omega)^{-1})$ is not directly given by the above equivalence of doubling tori. It also involves the projection map associated with $HF(\boldsymbol{C},\boldsymbol{C}')_u$. \\
\end{remark}

\bibliographystyle{amsplain}

\end{document}